%%%%%%%%%%%%%%%%%%%%%%% file template.tex %%%%%%%%%%%%%%%%%%%%%%%%%
%
% This is a general template file for the LaTeX package SVJour3
% for Springer journals.          Springer Heidelberg 2010/09/16
%
% Copy it to a new file with a new name and use it as the basis
% for your article. Delete % signs as needed.
%
% This template includes a few options for different layouts and
% content for various journals. Please consult a previous issue of
% your journal as needed.
%
%%%%%%%%%%%%%%%%%%%%%%%%%%%%%%%%%%%%%%%%%%%%%%%%%%%%%%%%%%%%%%%%%%%
%
% First comes an example EPS file -- just ignore it and
% proceed on the \documentclass line
% your LaTeX will extract the file if required
%\begin{filecontents*}{example.eps}
%%!PS-Adobe-3.0 EPSF-3.0
%%%BoundingBox: 19 19 221 221
%%%CreationDate: Mon Sep 29 1997
%%%Creator: programmed by hand (JK)
%%%EndComments
%gsave
%newpath
%  20 20 moveto
%  20 220 lineto
%  220 220 lineto
%  220 20 lineto
%closepath
%2 setlinewidth
%gsave
%  .4 setgray fill
%grestore
%stroke
%grestore
%\end{filecontents*}
%%
\RequirePackage{fix-cm}
\documentclass[smallextended]{svjour3}       % onecolumn (second format)
\smartqed  % flush right qed marks, e.g. at end of proof
\usepackage{graphicx}
\usepackage{geometry}                % See geometry.pdf to learn the layout options. There are lots.
\geometry{letterpaper}                   % ... or a4paper or a5paper or ... 
\usepackage{graphicx}
\usepackage{amssymb}
\usepackage{epstopdf}
\usepackage{float}

\usepackage{amsmath,amsfonts,amsthm,url,array,etoolbox,subcaption}
\captionsetup{compatibility=false}
\usepackage{enumerate}
\usepackage{tikz,pgfplots}
\usetikzlibrary{calc,decorations.markings}
\usetikzlibrary{shapes,snakes}
\theoremstyle{plain}
\newtheorem{thm}{Theorem}

\newtheorem{prop}[thm]{Proposition}

\theoremstyle{definition}

\theoremstyle{remark}

\DeclareGraphicsRule{.tif}{png}{.png}{`convert #1 `dirname #1`/`basename #1 .tif`.png}

%
% \usepackage{mathptmx}      % use Times fonts if available on your TeX system
%
% insert here the call for the packages your document requires
%\usepackage{latexsym}
% etc.
%
% please place your own definitions here and don't use \def but
% \newcommand{}{}
%
% Insert the name of "your journal" with
% \journalname{myjournal}
%

\begin{document}

\title{Lattice Surfaces and smallest triangles%\thanks{Grants or other notes
%about the article that should go on the front page should be
%placed here. General acknowledgments should be placed at the end of the article.}
}
%\subtitle{Do you have a subtitle?\\ If so, write it here}

%\titlerunning{Short form of title}        % if too long for running head

\author{Chenxi Wu        % \and
        %Second Author %etc.
}

%\authorrunning{Short form of author list} % if too long for running head

\institute{Chenxi Wu \at
              Department of Mathematics, Cornell University \\
              Tel.: +1-607-2277644\\
%              Fax: +123-45-678910\\
              \email{cw538@cornell.edu}           %  \\
%             \emph{Present address:} of F. Author  %  if needed
%           \and
%           S. Author \at
%              second address
}

\date{Received: date / Accepted: date}
% The correct dates will be entered by the editor

\maketitle

\begin{abstract}
We calculate the area of the smallest triangle and the area of the smallest virtual triangle for many known lattice surfaces. We show that our list of the lattice surfaces for which the area of the smallest virtual triangle greater than $1\over 20$ is complete. In particular, this means that there are no new lattice surfaces for which the area of the smallest virtual triangle is greater than .05. Our method follows an
algorithm described by Smillie and Weiss and improves on it in certain respects.
\keywords{Translation Surface \and Lattice Surface \and Thurston-Veech construction}
% \PACS{PACS code1 \and PACS code2 \and more}
\subclass{37E30 \and 37E35 \and 37E99}
\end{abstract}

\section{Introduction}
\label{intro}
The Veech group of a translation surface is the discrete subgroup of $SL(2,\mathbb{R})$ consisting of the derivatives of its affine automorphisms. 
Lattice surfaces, or Veech surfaces, are translation surfaces whose Veech groups are lattices. 
They can be seen as the generalization of the flat torus, and their geometric and dynamical properties have been extensively studied. 
For example, Veech \cite{veech1989teichmuller} showed that the growth rate of the holonomies of saddle connections in any Veech surface must be asymptotically quadratic. 
He also proved the Veech dichotomy, which says that the translation flow in any given Veech surface must be either minimal or completely periodic.\\

The flat torus is a lattice surface, so are its branched covers branching at one point. 
These are called square-tiled, or arithmetic, lattice surfaces. 
For square-tiled lattice surfaces, Delecroix implemented algorithms on Veech group, orbit graph and Lyapunov exponents in Sage, and listed square-tiled lattice surfaces that can be built from up to 10 squares. 
The first class of non-squared tiled lattice surfaces was found by Veech \cite{veech1989teichmuller}.
The following is a list of some known lattice surfaces (c.f.\cite{wright2014translation}):
\begin{enumerate}[(1)]
	\item The flat torus.
	\item Eigenforms in $\mathcal{H}(2)$ \cite{1,2}.
	\item Surfaces in the Prym eigenform loci of genus 3 and 4 \cite{3}.
	\item The Bouw-M\"oller family \cite{bm}, which includes the examples found by Veech and
	Ward.
	\item Isolated examples found by Vorobets \cite{4}, Kenyon-Smillie \cite{kenyon2000billiards}.
	\item Lattice surfaces formed by covering construction of the above.
\end{enumerate}

There are two new infinite families of lattice surfaces in the forthcoming works of McMullen-Mukamel-Wright and Eskin-McMullen-Mukamel-Wright.\\

Smillie and Weiss \cite{smillie2004minimal} showed that lattice surfaces are the translation surfaces whose $GL(2,\mathbb{R})$ orbits in their strata are closed. 
These closed $GL(2,\mathbb{R})$-orbits are called Teichm\"uller curves. 
According to \cite{eskin2013isolation}, all $GL(2,\mathbb{R})$-orbit closures in the strata are affine submanifolds.
Teichm\"uller curves are the orbit closures of the lowest possible dimension.\\

McMullen \cite{1} classified all the genus 2 lattice surfaces.
Kenyon-Smillie \cite{kenyon2000billiards}, Bainbridge-M\"oller \cite{bainbridge2012deligne}, Bainbridge-Habegger-M\"oller \cite{bainbridge2014teichmueller}, Matheus-Wright \cite{matheus2015hodge}, Lanneau-Nguyen-Wright \cite{lanneau2015finiteness} and others established finiteness results of Teichm\"uller curves in different settings. The forthcoming work of Eskin-Filip-Wright established finiteness result that are in a sense optimal.\\

Besides studying the finiteness of Teichm\"uller curves in given strata, another way to study lattice surfaces is to enumerate them through the minimal area of an embedded triangle formed by saddle connections, which characterizes the complexity of the surface in some sense. 
For example, a natural way to characterize the complexity of a square-tiled surface is the number of squares needed to construct this 
surface. If a surface $M$ is tiled by $N$ squares, the minimal area of an embedded triangle formed by saddle connections must be at least ${1\over 2N}$ of the total area. 
Vorobets \cite{4}, Smillie and Weiss \cite{5} established the existence of a lower bound of the area of such triangle for lattice surfaces. 
Hence, for a lattice surface $M$, we define the minimal area of triangle $Area_T(M)=\inf\{Area_{\Delta}\}/Area(M)$, where $\inf\{Area_{\Delta}\}$ is the minimal area of embedded triangles formed by saddle connections, and $Area(M)$ is the total area of the surface.\\

Furthermore, Smillie and Weiss \cite{5} showed that given $\epsilon>0$, any flat surface with $Area_T>\epsilon$ or lies on one of finitely many Teichm\"uller curves. 
From this, they developed an algorithm to find all lattice surfaces and list them in order of complexity.
A related notion introduced in \cite{5} is the minimal area of a virtual triangle, defined as $Area_{VT}(M)={1\over 2}\inf_{l,l'}{||l\times l'||}/Area(M)$, where $l$ and $l'$ are holonomies of non-parallel saddle connections, and $Area(M)$ is the total area of the surface. Smillie and Weiss \cite{5} gave the first lattice surfaces obtained by their algorithm, Samuel Leli\'evre found further examples and showed that it is interesting to plot $Area_T$ against $Area_{VT}$. 
Yumin Zhong \cite{yz} showed that the double regular pentagon has the smallest $Area_T$ among non-arithmetic lattice surfaces.\\

In this paper, we calculate the quantities $Area_T$ and $Area_{VT}$ of all published primitive lattice surfaces except those in the Prym eigenform loci of $\mathcal{H}(6)$. 
We also provide a list of the Veech surfaces with $Area_{VT}>0.05$.\\

In section 3, we calculate all the lattice surfaces with $Area_{VT}>0.05$ using the method outlined in \cite{5} with some improvements which we describe there. 
This method will eventually produce all lattice surfaces in theory, but the amount of computation may grow very fast as the bound on $Area_{VT}$ decreases. 
We show that the published list of lattice surfaces is complete up to $Area_{VT}>0.05$. 
More specifically, we show the following:
\begin{thm}The following is a complete list of lattice surfaces for which $Area_{VT}(M)>0.05$:\begin{enumerate}[(1)]
		\item Square-tiled surfaces having less than $10$ squares.
		\item Lattice surfaces in $\mathcal{H}(2)$ with discriminant $5$, $8$ or $17$.
		\item Lattice surfaces in the Prym eigenform loci in $\mathcal{H}(4)$ with discriminant $8$.
	\end{enumerate}
\end{thm}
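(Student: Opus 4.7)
\medskip

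\noindent\textbf{Proof proposal.} The plan is to exploit the two directions separately: first verify that every surface in the list has $Area_{VT}>0.05$, then use a bounded search, along the lines of the Smillie--Weiss algorithm from \cite{5}, to show that nothing else can occur. For the forward direction, each listed family admits an explicit polygonal model: for square-tiled surfaces with at most $10$ squares one can use the enumeration produced by Delecroix's Sage code, for the $\mathcal{H}(2)$ examples with discriminant $5,8,17$ one can use McMullen's L-shaped models from \cite{1}, and for the Prym loci in $\mathcal{H}(4)$ with discriminant $8$ one can use the explicit models from \cite{3}. In each case one enumerates short saddle connections (up to the Veech group), takes wedge products of pairs and minimizes, and checks that the resulting ratio exceeds $1/20$. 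This is a finite and routine verification for each surface.

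For the completeness direction, I would run the algorithm of Smillie--Weiss with bound $\epsilon = 0.05$. The basic reduction is: assume $M$ is a lattice surface with $Area_{VT}(M)>\epsilon$ and unit area. Using the $GL(2,\mathbb R)$ action, normalize so that the shortest saddle connection is horizontal of a controlled length. The virtual-triangle bound $|l\times l'|>2\epsilon$ for every non-parallel pair forces all other saddle connection holonomies shorter than a fixed length $L=L(\epsilon)$ to lie in a union of thin horizontal strips, and in fact to lie at vertical height $>2\epsilon/\|l_0\|$. Combined with Vorobets's length bound and the fact that the surface is cut into triangles (or a cylinder decomposition) by short saddle connections, this shows that $M$ belongs to a compact subset of a finite union of strata; the combinatorial data of a triangulation together with the lengths in that compact region form a finite, effectively enumerable set of candidates.

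The core computational step is therefore the following: I would enumerate all admissible combinatorial triangulations (equivalently cylinder-decomposition data) with a uniformly bounded number of triangles, compute the corresponding moduli as algebraic varieties cut out by the gluing and Area $=1$ constraints, and on each variety search for points defining a lattice surface with $Area_{VT}>0.05$. Candidate lattice surfaces can be detected through the Thurston--Veech construction by looking for two transverse cylinder decompositions with commensurable moduli; those passing this test are then identified against the known list by computing invariants (stratum, trace field, discriminant, Veech group generators, whether primitive or a cover). The improvements mentioned in Section~3 are used here to cut down the branching.

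The main obstacle is controlling combinatorial explosion: the number of triangulations to check grows rapidly as $\epsilon$ decreases, and for each the system of polynomial constraints can be high-dimensional. Handling the case analysis so that no lattice surface is missed, while keeping the enumeration tractable, is where most of the work lies; this is addressed by the refinements of the Smillie--Weiss method described in Section~3, which impose stronger local conditions (such as requiring that the short saddle connections participate in a cylinder decomposition with rational moduli ratios) to prune the search tree before solving. Once the enumeration terminates, matching each surviving candidate against the three families in the statement completes the proof.
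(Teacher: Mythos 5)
Your high-level strategy (verify the listed families directly, then run a Smillie--Weiss-type enumeration at $\epsilon=0.05$) is the same as the paper's, and the forward direction is fine: the paper gets the non-arithmetic values from the formulas of Theorems 1.2--1.3 and the square-tiled cases from Delecroix's computation. But the completeness half as you describe it has a genuine gap at its core. Your reduction --- compactness in finitely many strata, enumerate triangulations, then ``on each variety search for points defining a lattice surface with $Area_{VT}>0.05$'' --- only makes the \emph{combinatorial} data finite; for each combinatorial type you are left with a positive-dimensional family of flat structures, and ``searching'' such a family for lattice surfaces is not a finite or effective step (recognizing which members of a continuous family are Veech surfaces is precisely what one cannot do directly). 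The paper avoids any continuous search: normalize so that the two saddle connections realizing the \emph{smallest virtual triangle} are horizontal and vertical; then $Area_{VT}>\epsilon$ forces the induced Thurston--Veech decomposition into rectangles to have fewer than $1/(2\epsilon)=10$ rectangles, so there are finitely many permutation pairs, and --- this is the rigidity you are missing --- the surface is completely determined up to scale by that permutation pair together with the integer vector of moduli ratios (the Dehn twist vector), via the Perron--Frobenius eigenvector. Each candidate is a single explicitly computable surface, not a point to be found on a variety.

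Two further ingredients you omit are needed to make even this discrete enumeration finite and to finish it. First, for a fixed permutation pair there are a priori infinitely many Dehn twist vectors; the paper bounds them by Proposition 3.1 (derived from Proposition 3.6 of \cite{5}), which caps the pairwise products $n_in_j/\gcd(n_i,n_j)^2$ in terms of $\beta=0.05$. Your ``commensurable moduli'' condition detects candidates but does not bound them. Second, surviving candidates cannot simply be ``matched against the known list by invariants'': after the pruning criteria of Step 3, the paper is left with 50 cases, identifies two of them as the discriminant-8 Prym eigenforms in genus 3, and must explicitly exhibit, on each of the remaining 48 surfaces, a pair of saddle connections whose cross product is at most $1/10$ of the area; genus $\leq 2$ and one-cylinder (square-tiled) cases are removed at the outset by appealing to the existing classifications. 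Without the rectangle bound, the Perron--Frobenius rigidity, the twist-vector bound, and the final hand verification, the argument does not close.
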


There are two Teichm\"uller curves in $\mathcal{H}(2)$ that have discriminant 17, so there are five different non-arithmetic lattice surfaces up to affine action that have $Area_{VT}>0.05$. \\

Together with the calculation on square-tiled surfaces done by Delecroix, the numbers of surfaces for each given $Area_{VT}>0.05$ are calculated and summarized in Table~\ref{table1}.\\

\begin{table} 
\caption{Lattice surfaces with $Area_{VT}>0.5$}
\label{table1}	
\begin{tabular}{c|c|c}
	\hline\noalign{\smallskip}
	$Area_{VT}$&num. of surfaces&\\
	\noalign{\smallskip}\hline\noalign{\smallskip}
	1/2&1&flat torus\\
	1/6&1&square tiled\\
	1/8&3&square tiled\\
	1/10&7&square tiled\\
	0.0854102&1&double regular pentagon\\
	1/12&25&square tiled\\
	0.0732233&1&regular octagon\\
	1/14&40&square tiled\\
	1/16&113&square tiled\\
	1/18&125&square tiled\\
	0.0531695&2&genus $2$ lattice surface with discriminant $17$\\
	0.0517767&1&Prym surface in genus 3 with discriminant 8,\\
	&&which is also the Bouw-M\"oller surface $BM(3,4)$\\
	\noalign{\smallskip}\hline
\end{tabular}
\end{table}

In Section 2, we calculate the $Area_T$ and $Area_{VT}$ of lattice surfaces in $\mathcal{H}(2)$, in the Prym loci of $\mathcal{H}(4)$, in the Bouw-M\"oller family, as well as in the isolated Teichm\"uller curves discovered by Vorobets \cite{4} and Kenyon-Smillie \cite{kenyon2000billiards}.
Figures~\ref{p0}-\ref{p0d} show the $Area_T$ and $Area_{VT}$ of these lattice surfaces.\\ 

\begin{figure}[h]
	\begin{tikzpicture}
	\begin{axis}[
	axis lines=center,
	xlabel=$Area_T$, ylabel=$Area_{VT}$,   xtick={0,0.02,0.04,0.06,0.08,0.1,0.12},    xticklabels={$0$,$0.02$, $0.04$, $0.06$, $0.08$,$0.1$,$0.12$},
	scale=1.6] 
	\addplot[color=red,only marks,mark=x,mark size=1] coordinates { 
		(0.138197,0.0854102)
		(0.0419874,0.0419874)
		(0.0680983,0.0531695)
		(0.0863366,0.0227724)
		(0.0178808,0.0178808)
		(0.0324029,0.0324029)
		(0.0445013,0.0376982)
		(0.0547828,0.0233041)
		(0.063661,0.0108746)
		(0.00961901,0.00961901)
		(0.0182068,0.0182068)
		(0.0259355,0.0259355)
		(0.0329392,0.0290782)
		(0.0393248,0.0208682)
		(0.045178,0.0133426)
		(0.050569,0.0064113)
		(0.00595323,0.00595323)
		(0.0115005,0.0115005)
		(0.0166859,0.0166859)
		(0.0215471,0.0215471)
		(0.0261166,0.0236352)
		(0.0304225,0.0183725)
		(0.0344891,0.0134022)
		(0.0383378,0.00869824)
		(0.0419874,0.00423759)
		(0.00403252,0.00403252)
		(0.007876,0.007876)
		(0.0115448,0.0115448)
		(0.0150516,0.0150516)
		(0.0184082,0.0184082)
		(0.0216249,0.0198978)
		(0.0247112,0.0162504)
		(0.0276757,0.0127469)
		(0.0305261,0.00937824)
		(0.0332696,0.00613588)
		(0.0359128,0.00301216)
		(0.00290708,0.00290708)
		(0.00571504,0.00571504)
		(0.0084294,0.0084294)
		(0.0110552,0.0110552)
		(0.0135973,0.0135973)
		(0.0160599,0.0160599)
		(0.0184471,0.0171764)
		(0.0207627,0.0145046)
		(0.0230102,0.0119114)
		(0.0251928,0.00939292)
		(0.0273137,0.00694577)
		(0.0293756,0.00456659)
		(0.0313813,0.0022523)
		(0.00219302,0.00219302)
		(0.00432933,0.00432933)
		(0.00641132,0.00641132)
		(0.00844127,0.00844127)
		(0.0104213,0.0104213)
		(0.0123534,0.0123534)
		(0.0142395,0.0142395)
		(0.0160814,0.0151077)
		(0.0178808,0.0130684)
		(0.0196393,0.0110754)
		(0.0213585,0.00912707)
		(0.0230397,0.0072217)
		(0.0246843,0.00535775)
		(0.0262938,0.00353374)
		(0.0278692,0.00174827)
		(0.00171235,0.00171235)
		(0.00338999,0.00338999)
		(0.00503407,0.00503407)
		(0.00664571,0.00664571)
		(0.00822595,0.00822595)
		(0.0097758,0.0097758)
		(0.0112962,0.0112962)
		(0.0127881,0.0127881)
		(0.0142524,0.0134826)
		(0.0156899,0.011876)
		(0.0171014,0.0102984)
		(0.0184877,0.00874899)
		(0.0198496,0.00722692)
		(0.0211877,0.0057314)
		(0.0225027,0.00426167)
		(0.0237953,0.00281699)
		(0.0250661,0.00139666)
		(0.00137364,0.00137364)
		(0.00272488,0.00272488)
		(0.00405432,0.00405432)
		(0.00536255,0.00536255)
		(0.00665013,0.00665013)
		(0.00791758,0.00791758)
		(0.00916544,0.00916544)
		(0.0103942,0.0103942)
		(0.0116043,0.0116043)
		(0.0127963,0.0121725)
		(0.0139706,0.0108746)
		(0.0151276,0.00959579)
		(0.0162678,0.00833561)
		(0.0173915,0.00709361)
		(0.0184992,0.00586934)
		(0.0195912,0.00466241)
		(0.0206678,0.00347239)
		(0.0217296,0.0022989)
		(0.0227767,0.00114156)
		(0.00112613,0.00112613)
		(0.00223718,0.00223718)
		(0.00333348,0.00333348)
		(0.00441536,0.00441536)
		(0.00548312,0.00548312)
		(0.00653708,0.00653708)
		(0.00757752,0.00757752)
		(0.00860474,0.00860474)
		(0.00961901,0.00961901)
		(0.0106206,0.0106206)
		(0.0116098,0.0110941)
		(0.0125868,0.010024)
		(0.0135519,0.00896698)
		(0.0145053,0.00792275)
		(0.0732233,0.0732233)
		(0.105662,0.0386751)
		(0.0263932,0.0263932)
		(0.0458759,0.0458759)
		(0.0610178,0.0334734)
		(0.0732233,0.015165)
		(0.0128292,0.0128292)
		(0.0238665,0.0238665)
		(0.0334936,0.0334936)
		(0.0419874,0.0273501)
		(0.0495541,0.0172612)
		(0.0563508,0.00819889)
		(0.00746437,0.00746437)
		(0.0142977,0.0142977)
		(0.0205843,0.0205843)
		(0.0263932,0.0263932)
		(0.0317821,0.0227724)
		(0.0367993,0.0165009)
		(0.0414856,0.010643)
		(0.0458759,0.00515518)
		(0.00485483,0.00485483)
		(0.00943739,0.00943739)
		(0.0137722,0.0137722)
		(0.0178808,0.0178808)
		(0.0217823,0.0217823)
		(0.0254934,0.019408)
		(0.0290291,0.015165)
		(0.0324029,0.0111165)
		(0.0356268,0.00724788)
		(0.0387114,0.00354628)
		(0.00340152,0.00340152)
		(0.00666787,0.00666787)
		(0.00980777,0.00980777)
		(0.0128292,0.0128292)
		(0.0157394,0.0157394)
		(0.018545,0.018545)
		(0.0212521,0.0168725)
		(0.0238665,0.0138224)
		(0.0263932,0.0108746)
		(0.0288371,0.00802342)
		(0.0312025,0.00526374)
		(0.0334936,0.00259074)
		(0.00251263,0.00251263)
		(0.00495099,0.00495099)
		(0.00731866,0.00731866)
		(0.00961901,0.00961901)
		(0.0118552,0.0118552)
		(0.01403,0.01403)
		(0.0161464,0.0161464)
		(0.0182068,0.0149065)
		(0.0202137,0.0126129)
		(0.0221694,0.0103778)
		(0.024076,0.00819889)
		(0.0259355,0.00607376)
		(0.0277498,0.00400025)
		(0.0295207,0.00197632)
		(0.00193053,0.00193053)
		(0.00381702,0.00381702)
		(0.00566111,0.00566111)
		(0.00746437,0.00746437)
		(0.00922829,0.00922829)
		(0.0109543,0.0109543)
		(0.0126437,0.0126437)
		(0.0142977,0.0142977)
		(0.0159177,0.0133426)
		(0.0175047,0.0115572)
		(0.0190599,0.00980762)
		(0.0205843,0.0080927)
		(0.0220788,0.0064113)
		(0.0235446,0.00476233)
		(0.0249824,0.00314478)
		(0.0263932,0.00155765)
		(0.00152907,0.00152907)
		(0.00303042,0.00303042)
		(0.00450487,0.00450487)
		(0.00595323,0.00595323)
		(0.00737626,0.00737626)
		(0.00877468,0.00877468)
		(0.0101492,0.0101492)
		(0.0115005,0.0115005)
		(0.0128292,0.0128292)
		(0.0141359,0.0120712)
		(0.0154213,0.010643)
		(0.0166859,0.00923792)
		(0.0179302,0.00785531)
		(0.0191549,0.00649459)
		(0.0203603,0.00515518)
		(0.0215471,0.00383654)
		(0.0227157,0.00253814)
		(0.0238665,0.00125945)
		(0.0012407,0.0012407)
		(0.00246311,0.00246311)
		(0.00366768,0.00366768)
		(0.00485483,0.00485483)
		(0.00602498,0.00602498)
		(0.00717853,0.00717853)
		(0.00831588,0.00831588)
		(0.00943739,0.00943739)
		(0.0105434,0.0105434)
		(0.0116344,0.0116344)
		(0.0127105,0.0110184)
		(0.0137722,0.00985058)
		(0.0148198,0.00869824)
		(0.0158535,0.0075611)
		(0.0168738,0.00643882)
		(0.0178808,0.00533109)
		(0.0188749,0.00423759)
		(0.0198563,0.00315802)
		(0.0208254,0.00209209)
		(0.0217823,0.00103951)
		(0.0010267,0.0010267)
		(0.00204085,0.00204085)
		(0.00304271,0.00304271)
		(0.00403252,0.00403252)
	}; 
	\addplot[color=green,only marks,mark=+,mark size=1] coordinates{
		(0.0517767,0.0517767)
		(0.0528312,0.0386751)
		(0.0263932,0.0263932)
		(0.0229379,0.0229379)
		(0.0167367,0.0167367)
		(0.0366117,0.015165)
		(0.0128292,0.0128292)
		(0.0119332,0.0119332)
		(0.0193376,0.0193376)
		(0.0209937,0.0209937)
		(0.00863062,0.00863062)
		(0.0281754,0.00819889)
		(0.00746437,0.00746437)
		(0.00714887,0.00714887)
		(0.0183848,0.0183848)
		(0.0131966,0.0131966)
		(0.0113862,0.0113862)
		(0.0183996,0.0165009)
		(0.00532151,0.00532151)
		(0.0229379,0.00515518)
		(0.00485483,0.00485483)
		(0.00471869,0.00471869)
		(0.0137722,0.0137722)
		(0.00894041,0.00894041)
		(0.0119306,0.0119306)
		(0.0127467,0.0127467)
		(0.00758252,0.00758252)
		(0.0162015,0.0111165)
		(0.00362394,0.00362394)
		(0.0193557,0.00354628)
		(0.00340152,0.00340152)
		(0.00333393,0.00333393)
		(0.00980777,0.00980777)
		(0.00641459,0.00641459)
		(0.011652,0.011652)
		(0.00927249,0.00927249)
		(0.00843625,0.00843625)
		(0.0119332,0.0119332)
		(0.0054373,0.0054373)
		(0.0144185,0.00802342)
		(0.00263187,0.00263187)
		(0.0167468,0.00259074)
		(0.00251263,0.00251263)
		(0.00247549,0.00247549)
		(0.00731866,0.00731866)
		(0.00480951,0.00480951)
		(0.0110828,0.0110828)
		(0.00701502,0.00701502)
		(0.00863062,0.00863062)
		(0.00910342,0.00910342)
		(0.00630643,0.00630643)
		(0.0110847,0.0103778)
		(0.00409944,0.00409944)
		(0.0129677,0.00607376)
		(0.00200013,0.00200013)
		(0.0147604,0.00197632)
		(0.00193053,0.00193053)
		(0.00190851,0.00190851)
		(0.00566111,0.00566111)
		(0.00373219,0.00373219)
		(0.00922829,0.00922829)
		(0.00547714,0.00547714)
		(0.00851294,0.00851294)
		(0.00714887,0.00714887)
		(0.00667129,0.00667129)
		(0.00875236,0.00875236)
		(0.00490381,0.00490381)
		(0.0102921,0.0080927)
		(0.00320565,0.00320565)
		(0.0117723,0.00476233)
		(0.00157239,0.00157239)
		(0.0131966,0.00155765)
		(0.00152907,0.00152907)
		(0.00151521,0.00151521)
		(0.00450487,0.00450487)
		(0.00297662,0.00297662)
		(0.00737626,0.00737626)
		(0.00438734,0.00438734)
		(0.00825045,0.00825045)
		(0.00575024,0.00575024)
		(0.00676157,0.00676157)
		(0.00706796,0.00706796)
		(0.00532151,0.00532151)
		(0.00834293,0.00834293)
		(0.00392766,0.00392766)
		(0.00957744,0.00649459)
		(0.00257759,0.00257759)
		(0.0107736,0.00383654)
		(0.00126907,0.00126907)
		(0.0119332,0.00125945)
		(0.0012407,0.0012407)
		(0.00123156,0.00123156)
		(0.00366768,0.00366768)
		(0.00242742,0.00242742)
		(0.00602498,0.00602498)
		(0.00358927,0.00358927)
		(0.00792627,0.00792627)
		(0.00471869,0.00471869)
		(0.00670111,0.00670111)
		(0.00581718,0.00581718)
		(0.00550922,0.00550922)
		(0.0068861,0.0068861)
		(0.00434912,0.00434912)
		(0.00792677,0.0075611)
		(0.00321941,0.00321941)
		(0.00894041,0.00533109)
		(0.00211879,0.00211879)
		(0.00992817,0.00315802)
		(0.00104604,0.00104604)
		(0.0108911,0.00103951)
		(0.0010267,0.0010267)
		(0.00102043,0.00102043)
		(0.00304271,0.00304271)
		(0.00201626,0.00201626)
		(0.0340491,0.0340491)
		(0.0162015,0.0162015)
		(0.0273914,0.0233041)
		(0.00910342,0.00910342)
		(0.0164696,0.0164696)
		(0.022589,0.0133426)
		(0.00575024,0.00575024)
		(0.0107736,0.0107736)
		(0.0152112,0.0152112)
		(0.0191689,0.00869824)
		(0.003938,0.003938)
		(0.00752582,0.00752582)
		(0.0108125,0.0108125)
		(0.0138378,0.0127469)
		(0.0166348,0.00613588)
		(0.00285752,0.00285752)
		(0.00552762,0.00552762)
		(0.00802994,0.00802994)
		(0.0103813,0.0103813)
		(0.0125964,0.00939292)
		(0.0146878,0.00456659)
		(0.00216467,0.00216467)
		(0.00422063,0.00422063)
		(0.0061767,0.0061767)
		(0.00804071,0.00804071)
		(0.00981967,0.00981967)
		(0.0115198,0.0072217)
		(0.0131469,0.00353374)
		(0.00169499,0.00169499)
		(0.00332285,0.00332285)
		(0.0048879,0.0048879)
		(0.00639407,0.00639407)
		(0.00784496,0.00784496)
		(0.00924387,0.00874899)
		(0.0105938,0.0057314)
		(0.0118977,0.00281699)
		(0.00136244,0.00136244)
		(0.00268128,0.00268128)
		(0.00395879,0.00395879)
		(0.0051971,0.0051971)
		(0.00639816,0.00639816)
		(0.00756381,0.00756381)
		(0.00869575,0.00709361)
		(0.00979558,0.00466241)
		(0.0108648,0.0022989)
		(0.00111859,0.00111859)
		(0.00220768,0.00220768)
		(0.00326854,0.00326854)
		(0.00430237,0.00430237)
		(0.0053103,0.0053103)
		(0.0062934,0.0062934)
		(0.00725266,0.00725266)
	};
	\addplot[color=blue,only marks,mark=o,mark size=1] coordinates{
		(0.0732233,0.0732233)
		(0.0381966,0.0381966)
		(0.138197,0.0854102)
		(0.0537872,0.0298496)
		(0.0259951,0.0138317)
		(0.0517767,0.0517767)
		(0.0263932,0.0263932)
		(0.0161134,0.0113939)
		(0.0152511,0.0152511)
		(0.0374575,0.0374575)
		(0.00556222,0.00343764)
		(0.00959821,0.00959821)
		(0.00591286,0.00418102)
		(0.00351675,0.00351675)
		(0.00219681,0.00126833)
		(0.00642815,0.00642815)
		(0.0158728,0.0158728)
		(0.00236212,0.00145987)
		(0.00590471,0.00590471)
		(0.000972257,0.000539562)
		(0.004514,0.004514)
		(0.00279046,0.00197315)
		(0.00166199,0.00166199)
		(0.00103894,0.000599832)
		(0.000684393,0.000684393)
		(0.000471823,0.000255349)
		(0.00329041,0.00329041)
		(0.00814427,0.00814427)
		(0.00121316,0.000749776)
		(0.0030341,0.0030341)
		(0.000499731,0.00027733)
		(0.00137817,0.00137817)
		(0.000246739,0.000131287)
	}; 
	\addplot[color=black,only marks,mark=*,mark size=1] coordinates{
		(0.0528312,0.0386751)
		(0.0259951,0.0169671)
		(0.014189,0.014189)
	};
	\end{axis} 
	\end{tikzpicture}
	\caption{\label{p0} Red, green, blue and black points correspond to non-square-tiled lattice surfaces described in Theorem 1.2, 1.3, 1.4 and 1.5 respectively.}
\end{figure}
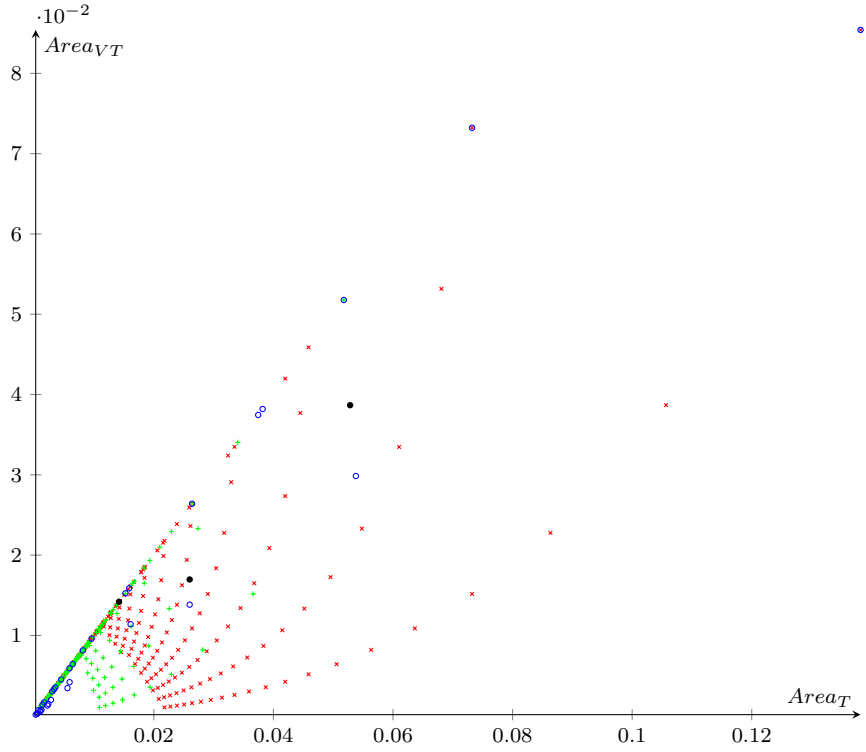

\begin{figure}
	\includegraphics[scale=0.4]{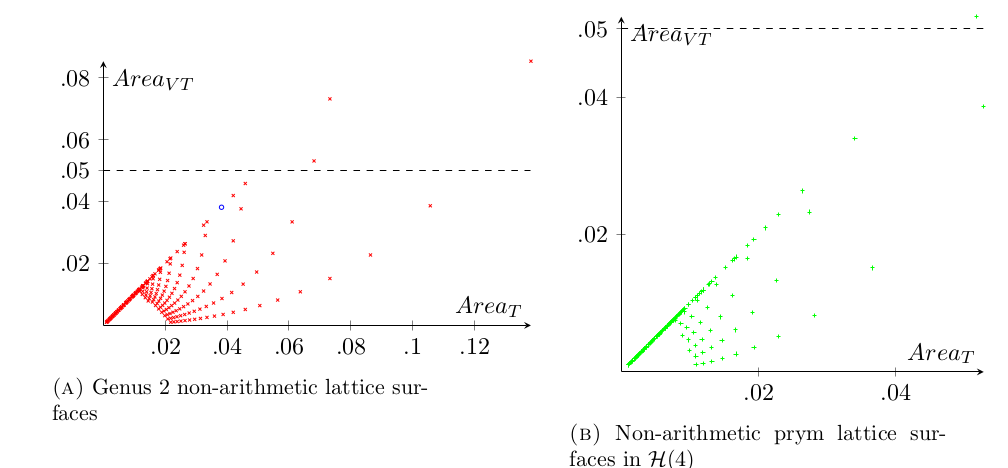}
	\caption{\label{p0b}}
\end{figure}

\begin{figure}
	\includegraphics[scale=0.4]{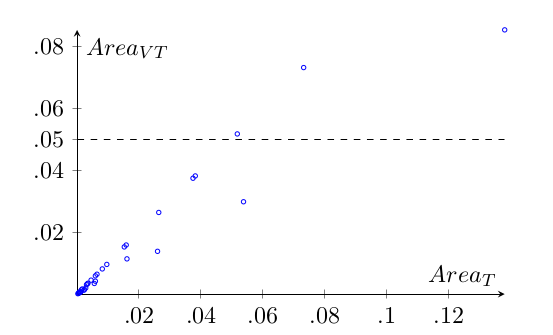}
	\caption{\label{p0c}Non-arithmetic cases in the Bouw-M\"oller family}
\end{figure}

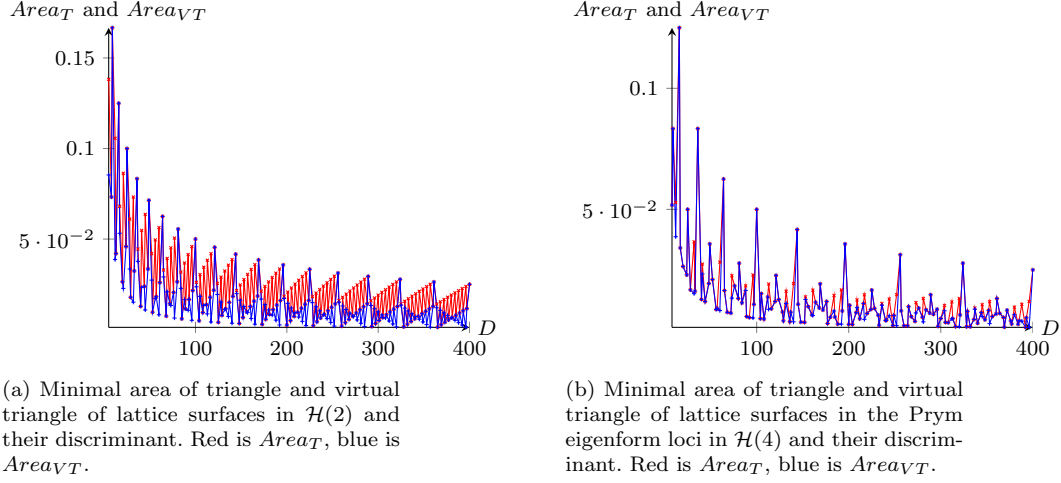
\begin{figure}
	%\centering
	\begin{subfigure}[t]{0.35\textwidth}
		\begin{tikzpicture}
		\begin{axis}[
		axis lines=center,
		xlabel=$D$, ylabel=$Area_T\text{ and }Area_{VT}$, scale=0.7,
		every axis x label/.style={
			at={(current axis.right of origin)},
			anchor=west,
		},
		every axis y label/.style={
			at={(current axis.above origin)},
			anchor=south,
		}] 
		\addplot[color=red,mark=x,mark size=0.8] coordinates {
			(5,0.138197)
			(8,0.0732233)
			(9,0.166667)
			(12,0.105662)
			(13,0.0419874)
			(16,0.125)
			(17,0.0680983)
			(20,0.0263932)
			(21,0.0863366)
			(24,0.0458759)
			(25,0.1)
			(28,0.0610178)
			(29,0.0178808)
			(32,0.0732233)
			(33,0.0324029)
			(36,0.0833333)
			(37,0.0445013)
			(40,0.0128292)
			(41,0.0547828)
			(44,0.0238665)
			(45,0.063661)
			(48,0.0334936)
			(49,0.0714286)
			(52,0.0419874)
			(53,0.00961901)
			(56,0.0495541)
			(57,0.0182068)
			(60,0.0563508)
			(61,0.0259355)
			(64,0.0625)
			(65,0.0329392)
			(68,0.00746437)
			(69,0.0393248)
			(72,0.0142977)
			(73,0.045178)
			(76,0.0205843)
			(77,0.050569)
			(80,0.0263932)
			(81,0.0555556)
			(84,0.0317821)
			(85,0.00595323)
			(88,0.0367993)
			(89,0.0115005)
			(92,0.0414856)
			(93,0.0166859)
			(96,0.0458759)
			(97,0.0215471)
			(100,0.05)
			(101,0.0261166)
			(104,0.00485483)
			(105,0.0304225)
			(108,0.00943739)
			(109,0.0344891)
			(112,0.0137722)
			(113,0.0383378)
			(116,0.0178808)
			(117,0.0419874)
			(120,0.0217823)
			(121,0.0454545)
			(124,0.0254934)
			(125,0.00403252)
			(128,0.0290291)
			(129,0.007876)
			(132,0.0324029)
			(133,0.0115448)
			(136,0.0356268)
			(137,0.0150516)
			(140,0.0387114)
			(141,0.0184082)
			(144,0.0416667)
			(145,0.0216249)
			(148,0.00340152)
			(149,0.0247112)
			(152,0.00666787)
			(153,0.0276757)
			(156,0.00980777)
			(157,0.0305261)
			(160,0.0128292)
			(161,0.0332696)
			(164,0.0157394)
			(165,0.0359128)
			(168,0.018545)
			(169,0.0384615)
			(172,0.0212521)
			(173,0.00290708)
			(176,0.0238665)
			(177,0.00571504)
			(180,0.0263932)
			(181,0.0084294)
			(184,0.0288371)
			(185,0.0110552)
			(188,0.0312025)
			(189,0.0135973)
			(192,0.0334936)
			(193,0.0160599)
			(196,0.0357143)
			(197,0.0184471)
			(200,0.00251263)
			(201,0.0207627)
			(204,0.00495099)
			(205,0.0230102)
			(208,0.00731866)
			(209,0.0251928)
			(212,0.00961901)
			(213,0.0273137)
			(216,0.0118552)
			(217,0.0293756)
			(220,0.01403)
			(221,0.0313813)
			(224,0.0161464)
			(225,0.0333333)
			(228,0.0182068)
			(229,0.00219302)
			(232,0.0202137)
			(233,0.00432933)
			(236,0.0221694)
			(237,0.00641132)
			(240,0.024076)
			(241,0.00844127)
			(244,0.0259355)
			(245,0.0104213)
			(248,0.0277498)
			(249,0.0123534)
			(252,0.0295207)
			(253,0.0142395)
			(256,0.03125)
			(257,0.0160814)
			(260,0.00193053)
			(261,0.0178808)
			(264,0.00381702)
			(265,0.0196393)
			(268,0.00566111)
			(269,0.0213585)
			(272,0.00746437)
			(273,0.0230397)
			(276,0.00922829)
			(277,0.0246843)
			(280,0.0109543)
			(281,0.0262938)
			(284,0.0126437)
			(285,0.0278692)
			(288,0.0142977)
			(289,0.0294118)
			(292,0.0159177)
			(293,0.00171235)
			(296,0.0175047)
			(297,0.00338999)
			(300,0.0190599)
			(301,0.00503407)
			(304,0.0205843)
			(305,0.00664571)
			(308,0.0220788)
			(309,0.00822595)
			(312,0.0235446)
			(313,0.0097758)
			(316,0.0249824)
			(317,0.0112962)
			(320,0.0263932)
			(321,0.0127881)
			(324,0.0277778)
			(325,0.0142524)
			(328,0.00152907)
			(329,0.0156899)
			(332,0.00303042)
			(333,0.0171014)
			(336,0.00450487)
			(337,0.0184877)
			(340,0.00595323)
			(341,0.0198496)
			(344,0.00737626)
			(345,0.0211877)
			(348,0.00877468)
			(349,0.0225027)
			(352,0.0101492)
			(353,0.0237953)
			(356,0.0115005)
			(357,0.0250661)
			(360,0.0128292)
			(361,0.0263158)
			(364,0.0141359)
			(365,0.00137364)
			(368,0.0154213)
			(369,0.00272488)
			(372,0.0166859)
			(373,0.00405432)
			(376,0.0179302)
			(377,0.00536255)
			(380,0.0191549)
			(381,0.00665013)
			(384,0.0203603)
			(385,0.00791758)
			(388,0.0215471)
			(389,0.00916544)
			(392,0.0227157)
			(393,0.0103942)
			(396,0.0238665)
			(397,0.0116043)
			(400,0.025)	}; 
		\addplot[color=blue,mark=+,mark size=0.8] coordinates {
			(5,0.0854102)
			(8,0.0732233)
			(9,0.166667)
			(12,0.0386751)
			(13,0.0419874)
			(16,0.125)
			(17,0.0531695)
			(20,0.0263932)
			(21,0.0227724)
			(24,0.0458759)
			(25,0.1)
			(28,0.0334734)
			(29,0.0178808)
			(32,0.015165)
			(33,0.0324029)
			(36,0.0833333)
			(37,0.0376982)
			(40,0.0128292)
			(41,0.0233041)
			(44,0.0238665)
			(45,0.0108746)
			(48,0.0334936)
			(49,0.0714286)
			(52,0.0273501)
			(53,0.00961901)
			(56,0.0172612)
			(57,0.0182068)
			(60,0.00819889)
			(61,0.0259355)
			(64,0.0625)
			(65,0.0290782)
			(68,0.00746437)
			(69,0.0208682)
			(72,0.0142977)
			(73,0.0133426)
			(76,0.0205843)
			(77,0.0064113)
			(80,0.0263932)
			(81,0.0555556)
			(84,0.0227724)
			(85,0.00595323)
			(88,0.0165009)
			(89,0.0115005)
			(92,0.010643)
			(93,0.0166859)
			(96,0.00515518)
			(97,0.0215471)
			(100,0.05)
			(101,0.0236352)
			(104,0.00485483)
			(105,0.0183725)
			(108,0.00943739)
			(109,0.0134022)
			(112,0.0137722)
			(113,0.00869824)
			(116,0.0178808)
			(117,0.00423759)
			(120,0.0217823)
			(121,0.0454545)
			(124,0.019408)
			(125,0.00403252)
			(128,0.015165)
			(129,0.007876)
			(132,0.0111165)
			(133,0.0115448)
			(136,0.00724788)
			(137,0.0150516)
			(140,0.00354628)
			(141,0.0184082)
			(144,0.0416667)
			(145,0.0198978)
			(148,0.00340152)
			(149,0.0162504)
			(152,0.00666787)
			(153,0.0127469)
			(156,0.00980777)
			(157,0.00937824)
			(160,0.0128292)
			(161,0.00613588)
			(164,0.0157394)
			(165,0.00301216)
			(168,0.018545)
			(169,0.0384615)
			(172,0.0168725)
			(173,0.00290708)
			(176,0.0138224)
			(177,0.00571504)
			(180,0.0108746)
			(181,0.0084294)
			(184,0.00802342)
			(185,0.0110552)
			(188,0.00526374)
			(189,0.0135973)
			(192,0.00259074)
			(193,0.0160599)
			(196,0.0357143)
			(197,0.0171764)
			(200,0.00251263)
			(201,0.0145046)
			(204,0.00495099)
			(205,0.0119114)
			(208,0.00731866)
			(209,0.00939292)
			(212,0.00961901)
			(213,0.00694577)
			(216,0.0118552)
			(217,0.00456659)
			(220,0.01403)
			(221,0.0022523)
			(224,0.0161464)
			(225,0.0333333)
			(228,0.0149065)
			(229,0.00219302)
			(232,0.0126129)
			(233,0.00432933)
			(236,0.0103778)
			(237,0.00641132)
			(240,0.00819889)
			(241,0.00844127)
			(244,0.00607376)
			(245,0.0104213)
			(248,0.00400025)
			(249,0.0123534)
			(252,0.00197632)
			(253,0.0142395)
			(256,0.03125)
			(257,0.0151077)
			(260,0.00193053)
			(261,0.0130684)
			(264,0.00381702)
			(265,0.0110754)
			(268,0.00566111)
			(269,0.00912707)
			(272,0.00746437)
			(273,0.0072217)
			(276,0.00922829)
			(277,0.00535775)
			(280,0.0109543)
			(281,0.00353374)
			(284,0.0126437)
			(285,0.00174827)
			(288,0.0142977)
			(289,0.0294118)
			(292,0.0133426)
			(293,0.00171235)
			(296,0.0115572)
			(297,0.00338999)
			(300,0.00980762)
			(301,0.00503407)
			(304,0.0080927)
			(305,0.00664571)
			(308,0.0064113)
			(309,0.00822595)
			(312,0.00476233)
			(313,0.0097758)
			(316,0.00314478)
			(317,0.0112962)
			(320,0.00155765)
			(321,0.0127881)
			(324,0.0277778)
			(325,0.0134826)
			(328,0.00152907)
			(329,0.011876)
			(332,0.00303042)
			(333,0.0102984)
			(336,0.00450487)
			(337,0.00874899)
			(340,0.00595323)
			(341,0.00722692)
			(344,0.00737626)
			(345,0.0057314)
			(348,0.00877468)
			(349,0.00426167)
			(352,0.0101492)
			(353,0.00281699)
			(356,0.0115005)
			(357,0.00139666)
			(360,0.0128292)
			(361,0.0263158)
			(364,0.0120712)
			(365,0.00137364)
			(368,0.010643)
			(369,0.00272488)
			(372,0.00923792)
			(373,0.00405432)
			(376,0.00785531)
			(377,0.00536255)
			(380,0.00649459)
			(381,0.00665013)
			(384,0.00515518)
			(385,0.00791758)
			(388,0.00383654)
			(389,0.00916544)
			(392,0.00253814)
			(393,0.0103942)
			(396,0.00125945)
			(397,0.0116043)
			(400,0.025)
		}; 
		\end{axis} 
		\end{tikzpicture}
		\caption{Minimal area of triangle and virtual triangle of lattice surfaces in $\mathcal{H}(2)$ and their  discriminant. Red is $Area_T$, blue is $Area_{VT}$.}
	\end{subfigure}
	\hspace{2cm}
	\begin{subfigure}[t]{0.35\textwidth}
		\begin{tikzpicture}
		\begin{axis}[
		axis lines=center,
		xlabel=$D$, ylabel=$Area_T\text{ and }Area_{VT}$, scale=0.7,
		every axis x label/.style={
			at={(current axis.right of origin)},
			anchor=west,
		},
		every axis y label/.style={
			at={(current axis.above origin)},
			anchor=south,
		}] 
		\addplot[color=red,mark=x,mark size=0.8] coordinates {
			(8,0.0517767)
			(9,0.0833333)
			(12,0.0528312)
			(16,0.125)
			(17,0.0340491)
			(20,0.0263932)
			(24,0.0229379)
			(25,0.05)
			(28,0.0167367)
			(32,0.0366117)
			(33,0.0162015)
			(36,0.0833333)
			(40,0.0128292)
			(41,0.0273914)
			(44,0.0119332)
			(48,0.0193376)
			(49,0.0357143)
			(52,0.0209937)
			(56,0.00863062)
			(57,0.00910342)
			(60,0.0281754)
			(64,0.0625)
			(65,0.0164696)
			(68,0.00746437)
			(72,0.00714887)
			(73,0.022589)
			(76,0.0183848)
			(80,0.0131966)
			(81,0.0277778)
			(84,0.0113862)
			(88,0.0183996)
			(89,0.00575024)
			(92,0.00532151)
			(96,0.0229379)
			(97,0.0107736)
			(100,0.05)
			(104,0.00485483)
			(105,0.0152112)
			(108,0.00471869)
			(112,0.0137722)
			(113,0.0191689)
			(116,0.00894041)
			(120,0.0119306)
			(121,0.0227273)
			(124,0.0127467)
			(128,0.00758252)
			(129,0.003938)
			(132,0.0162015)
			(136,0.00362394)
			(137,0.00752582)
			(140,0.0193557)
			(144,0.0416667)
			(145,0.0108125)
			(148,0.00340152)
			(152,0.00333393)
			(153,0.0138378)
			(156,0.00980777)
			(160,0.00641459)
			(161,0.0166348)
			(164,0.011652)
			(168,0.00927249)
			(169,0.0192308)
			(172,0.00843625)
			(176,0.0119332)
			(177,0.00285752)
			(180,0.0054373)
			(184,0.0144185)
			(185,0.00552762)
			(188,0.00263187)
			(192,0.0167468)
			(193,0.00802994)
			(196,0.0357143)
			(200,0.00251263)
			(201,0.0103813)
			(204,0.00247549)
			(208,0.00731866)
			(209,0.0125964)
			(212,0.00480951)
			(216,0.0110828)
			(217,0.0146878)
			(220,0.00701502)
			(224,0.00863062)
			(225,0.0166667)
			(228,0.00910342)
			(232,0.00630643)
			(233,0.00216467)
			(236,0.0110847)
			(240,0.00409944)
			(241,0.00422063)
			(244,0.0129677)
			(248,0.00200013)
			(249,0.0061767)
			(252,0.0147604)
			(256,0.03125)
			(257,0.00804071)
			(260,0.00193053)
			(264,0.00190851)
			(265,0.00981967)
			(268,0.00566111)
			(272,0.00373219)
			(273,0.0115198)
			(276,0.00922829)
			(280,0.00547714)
			(281,0.0131469)
			(284,0.00851294)
			(288,0.00714887)
			(289,0.0147059)
			(292,0.00667129)
			(296,0.00875236)
			(297,0.00169499)
			(300,0.00490381)
			(304,0.0102921)
			(305,0.00332285)
			(308,0.00320565)
			(312,0.0117723)
			(313,0.0048879)
			(316,0.00157239)
			(320,0.0131966)
			(321,0.00639407)
			(324,0.0277778)
			(328,0.00152907)
			(329,0.00784496)
			(332,0.00151521)
			(336,0.00450487)
			(337,0.00924387)
			(340,0.00297662)
			(344,0.00737626)
			(345,0.0105938)
			(348,0.00438734)
			(352,0.00825045)
			(353,0.0118977)
			(356,0.00575024)
			(360,0.00676157)
			(361,0.0131579)
			(364,0.00706796)
			(368,0.00532151)
			(369,0.00136244)
			(372,0.00834293)
			(376,0.00392766)
			(377,0.00268128)
			(380,0.00957744)
			(384,0.00257759)
			(385,0.00395879)
			(388,0.0107736)
			(392,0.00126907)
			(393,0.0051971)
			(396,0.0119332)
			(400,0.025)
		}; 
		\addplot[color=blue,mark=+,mark size=0.8] coordinates {
			(8,0.0517767)
			(9,0.0833333)
			(12,0.0386751)
			(16,0.125)
			(17,0.0340491)
			(20,0.0263932)
			(24,0.0229379)
			(25,0.05)
			(28,0.0167367)
			(32,0.015165)
			(33,0.0162015)
			(36,0.0833333)
			(40,0.0128292)
			(41,0.0233041)
			(44,0.0119332)
			(48,0.0193376)
			(49,0.0357143)
			(52,0.0209937)
			(56,0.00863062)
			(57,0.00910342)
			(60,0.00819889)
			(64,0.0625)
			(65,0.0164696)
			(68,0.00746437)
			(72,0.00714887)
			(73,0.0133426)
			(76,0.0183848)
			(80,0.0131966)
			(81,0.0277778)
			(84,0.0113862)
			(88,0.0165009)
			(89,0.00575024)
			(92,0.00532151)
			(96,0.00515518)
			(97,0.0107736)
			(100,0.05)
			(104,0.00485483)
			(105,0.0152112)
			(108,0.00471869)
			(112,0.0137722)
			(113,0.00869824)
			(116,0.00894041)
			(120,0.0119306)
			(121,0.0227273)
			(124,0.0127467)
			(128,0.00758252)
			(129,0.003938)
			(132,0.0111165)
			(136,0.00362394)
			(137,0.00752582)
			(140,0.00354628)
			(144,0.0416667)
			(145,0.0108125)
			(148,0.00340152)
			(152,0.00333393)
			(153,0.0127469)
			(156,0.00980777)
			(160,0.00641459)
			(161,0.00613588)
			(164,0.011652)
			(168,0.00927249)
			(169,0.0192308)
			(172,0.00843625)
			(176,0.0119332)
			(177,0.00285752)
			(180,0.0054373)
			(184,0.00802342)
			(185,0.00552762)
			(188,0.00263187)
			(192,0.00259074)
			(193,0.00802994)
			(196,0.0357143)
			(200,0.00251263)
			(201,0.0103813)
			(204,0.00247549)
			(208,0.00731866)
			(209,0.00939292)
			(212,0.00480951)
			(216,0.0110828)
			(217,0.00456659)
			(220,0.00701502)
			(224,0.00863062)
			(225,0.0166667)
			(228,0.00910342)
			(232,0.00630643)
			(233,0.00216467)
			(236,0.0103778)
			(240,0.00409944)
			(241,0.00422063)
			(244,0.00607376)
			(248,0.00200013)
			(249,0.0061767)
			(252,0.00197632)
			(256,0.03125)
			(257,0.00804071)
			(260,0.00193053)
			(264,0.00190851)
			(265,0.00981967)
			(268,0.00566111)
			(272,0.00373219)
			(273,0.0072217)
			(276,0.00922829)
			(280,0.00547714)
			(281,0.00353374)
			(284,0.00851294)
			(288,0.00714887)
			(289,0.0147059)
			(292,0.00667129)
			(296,0.00875236)
			(297,0.00169499)
			(300,0.00490381)
			(304,0.0080927)
			(305,0.00332285)
			(308,0.00320565)
			(312,0.00476233)
			(313,0.0048879)
			(316,0.00157239)
			(320,0.00155765)
			(321,0.00639407)
			(324,0.0277778)
			(328,0.00152907)
			(329,0.00784496)
			(332,0.00151521)
			(336,0.00450487)
			(337,0.00874899)
			(340,0.00297662)
			(344,0.00737626)
			(345,0.0057314)
			(348,0.00438734)
			(352,0.00825045)
			(353,0.00281699)
			(356,0.00575024)
			(360,0.00676157)
			(361,0.0131579)
			(364,0.00706796)
			(368,0.00532151)
			(369,0.00136244)
			(372,0.00834293)
			(376,0.00392766)
			(377,0.00268128)
			(380,0.00649459)
			(384,0.00257759)
			(385,0.00395879)
			(388,0.00383654)
			(392,0.00126907)
			(393,0.0051971)
			(396,0.00125945)
			(400,0.025)
		}; 
		\end{axis} 
		\end{tikzpicture}
		\caption{Minimal area of triangle and virtual triangle of lattice surfaces in the Prym eigenform loci in $\mathcal{H}(4)$ and their discriminant. Red is $Area_T$, blue is $Area_{VT}$.}
	\end{subfigure}
	
	\caption{\label{p0d}The pattern is related to the fractional part of the sequence$\sqrt{n}$.}
\end{figure}

These results are summarized in the following theorems:
\begin{thm}
	Let $M$ be a lattice surface in $\mathcal{H}(2)$ with discriminant $D$. Then it holds:
	\begin{itemize}
		\item If $D$ is a square, 
		$$Area_T(M)=Area_{VT}(M)={1\over{2\sqrt{D}}}.$$
		\item If $D$ is not a square, let $e_D$ be the largest integer satisfying $e_D\equiv D \mod{2}$ and $e_D<\sqrt{D}$, then 
		$$Area_T(M)= {\sqrt{D}-e_D\over 4\sqrt{D}},\hspace{8pt}Area_{VT}(M)= \min\left({\sqrt{D}-e_D\over 4\sqrt{D}},{2+e_D-\sqrt{D}\over 4\sqrt{D}}\right).$$
	\end{itemize}
\end{thm}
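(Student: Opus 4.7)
The plan is to exploit McMullen's L-shape prototype \cite{1}: every lattice surface $M$ in $\mathcal{H}(2)$ with discriminant $D$ is $GL(2,\mathbb{R})$-equivalent to a canonical L-shape built from two axis-aligned rectangles meeting at the unique zero of order $2$, with side lengths explicitly describable in terms of $\sqrt{D}$ (and $e_D$ in the non-square case, via the generator $\lambda=(e_D+\sqrt{D})/2$ of the trace field, which satisfies $\lambda^2 = e_D\lambda + (D-e_D^2)/4$). Since both $Area_T$ and $Area_{VT}$ are $GL(2,\mathbb{R})$-invariant, I may carry out the entire computation on this representative.

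The square case $D=n^2$ is immediate: the L-shape is tiled by $n$ unit squares of total area $n$, all saddle connections have holonomy in $\mathbb{Z}^2$, the minimum value of $|l\times l'|$ over non-parallel pairs equals $1$, and the corresponding half-square triangle is embedded. Dividing by the total area yields $Area_T = Area_{VT} = 1/(2\sqrt{D})$.

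In the non-square case I would enumerate the short edges of the two sub-rectangles of the L-shape and compute cross products of non-parallel pairs among them. After area normalization, exactly two values appear: $(\sqrt{D}-e_D)/(4\sqrt{D})$ and $(e_D+2-\sqrt{D})/(4\sqrt{D})$, and they sum to the ``torus quantum'' $1/(2\sqrt{D})$. The smaller of the two is the candidate for $Area_{VT}$, while the first value is always realized by a half-rectangle triangle genuinely embedded inside one of the sub-rectangles, giving the candidate for $Area_T$.

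The main obstacle is verifying minimality: I must rule out that some longer pair of saddle connections produces a strictly smaller cross product, or that some other embedded triangle is smaller. The plan is to use the horizontal and vertical parabolic elements of the Veech group, which are generated by the Dehn twists in the two cylinder decompositions of the L-shape; modulo these parabolics only finitely many classes of saddle connections can have length small enough to yield a cross product below the candidate minimum, and those finitely many can be inspected directly. Alternatively, one may feed the L-shape into the Smillie--Weiss enumeration developed in Section~3 and read the minimum off of its output, which amounts to the same finite verification organized differently.
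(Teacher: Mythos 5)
Your overall framework (reduce to an explicit L-shaped prototype by $GL(2,\mathbb{R})$-invariance, compute candidate values there, then verify minimality) is reasonable, and your square case is essentially the paper's argument. But the heart of the theorem is precisely the minimality verification that you defer, and the reduction you propose for it does not work as stated. Bounding $|l\times l'|$ does not bound the lengths of $l$ and $l'$ (two very long, nearly parallel saddle connections can have a tiny cross product), and reducing modulo only the horizontal and vertical parabolic twists of a single L-shaped representative does not leave finitely many candidate classes: those two parabolics generate much less than the full Veech group, and there are infinitely many saddle-connection directions modulo them. The finiteness you need comes from a different mechanism, which is what the paper uses: by the Veech dichotomy every saddle-connection direction is completely periodic, so every edge of an embedded triangle (and each member of a virtual-triangle pair) lies in a two-cylinder splitting; a geometric lemma (the Proposition in Section 2 of the paper) bounds the triangle area, resp.\ the cross product, from below by the explicit quantities $c_1h_1/2$, $c_1h_2/2$, resp.\ $\min\{c_1,c_2-c_1\}\min\{h_1,h_2\}$ attached to that splitting; and McMullen's prototype classification (Theorem 3.3 of \cite{1}) reduces the minimization over all splittings to a finite list of prototypes $(a,b,c,e)$, which is then settled by the parity constraint $2b-e\equiv D \pmod 2$ together with exhibiting a prototype attaining the bound. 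Your assertion that only the two values $(\sqrt{D}-e_D)/(4\sqrt{D})$ and $(2+e_D-\sqrt{D})/(4\sqrt{D})$ can occur is the conclusion of that argument, not a substitute for it.

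A second gap: when $D\equiv 1\pmod 8$ there are two Teichm\"uller curves of discriminant $D$, so there is no single canonical L-shape, and the theorem as stated requires showing that both spin components have the same $Area_T$ and $Area_{VT}$. The paper does this by exhibiting, on each component, prototypes realizing the extremal values (e.g.\ $(0,(D-e_D^2)/4,1,-e_D)$ and $(0,1,(D-e_D^2)/4,-e_D)$ for $Area_T$, and two further prototypes for $Area_{VT}$). Your proposal does not address this case at all, so even with the minimality argument repaired it would only establish the formula for one of the two surfaces of each such discriminant.
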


\begin{thm}
	Let $M$ be a lattice surface in the Prym locus in $\mathcal{H}(4)$ with discriminant $D$. Then:
	\begin{itemize}
		\item If $D$ is a square,
		\begin{itemize}
			\item If $D$ is even,
			$$Area_T(M)=Area_{VT}(M)={1\over{2\sqrt{D}}}.$$
			\item If $D$ is odd,
			$$Area_T(M)=Area_{VT}(M)={1\over{4\sqrt{D}}}.$$
		\end{itemize}
		\item If $D$ is not a square, denote by $e'_D$ the largest integer satisfying ${e'_D}^2\equiv D \mod{8}$ and $e'_D<\sqrt{D}$, then:
		\begin{itemize}
			\item If $\sqrt{D}-e'_D<4/3$, 
			$$Area_T(M)=Area_{VT}(M)={{\sqrt{D}-e'_D}\over{8\sqrt{D}}}.$$
			\item If $4/3<\sqrt{D}-e'_D<2$, 
			$$Area_T(M)={{\sqrt{D}-e'_D}\over{8\sqrt{D}}}, Area_{VT}(M)={{2+e'_D-\sqrt{D}}\over{4\sqrt{D}}}.$$
			\item If $2<\sqrt{D}-e'_D<8/3$, 
			$$Area_T(M)=Area_{VT}(M)={{\sqrt{D}-e'_D-2}\over{4\sqrt{D}}}.$$
			\item If $\sqrt{D}-e'_D>8/3$, 
			$$Area_T(M)=Area_{VT}(M)={{4-\sqrt{D}+e'_D}\over{8\sqrt{D}}}.$$
		\end{itemize}
	\end{itemize}
\end{thm}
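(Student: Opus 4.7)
The plan is to work directly with the prototype polygons for Prym eigenforms in $\mathcal{H}(4)$ as developed by Lanneau-Nguyen. For non-square discriminant $D$ and each valid parameter $e'_D$ with ${e'_D}^2 \equiv D \pmod{8}$ and $e'_D<\sqrt{D}$, one has a specific polygonal representative whose edge holonomies are explicit $\mathbb{Z}$-linear combinations of $1$ and $\sqrt{D}$, together with a short collection of obvious short saddle connections (edges, diagonals, and a few images under one step of a horocycle or of a Veech group generator). After normalizing to unit area, I would tabulate $\tfrac12 \lvert l\times l'\rvert$ for every pair of non-parallel vectors in this list; the minimum over the table, and its restriction to pairs spanning an embedded triangle, give upper bounds for $Area_{VT}$ and $Area_T$ respectively.

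The core of the argument is the four-part case analysis in $x:=\sqrt{D}-e'_D$. In each of the intervals $(0,4/3)$, $(4/3,2)$, $(2,8/3)$ and $(8/3,\infty)$ a different candidate pair should achieve the minimum cross product, with the thresholds $4/3$, $2$, $8/3$ arising from equating the cross products of the two competing pairs that switch roles as $x$ grows. In the middle band $(4/3,2)$ I expect the virtual minimizer to be a pair that does not bound an embedded triangle on the polygon, forcing $Area_T>Area_{VT}$ there, in close analogy with the mechanism behind Theorem~1.2 in $\mathcal{H}(2)$. The square-discriminant cases are handled separately and more easily: the surface is square-tiled, and one reads the smallest triangle off directly, obtaining a half-square for odd $D$ and the split of a single square for even $D$, giving $\tfrac{1}{4\sqrt{D}}$ and $\tfrac{1}{2\sqrt{D}}$ respectively.

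The main obstacle is sharpness: showing that no saddle connection deep in the Veech group orbit bounds a smaller triangle than the ones already appearing on the prototype. I would address this with a Smillie-Weiss style a priori length bound, so that any saddle connection contributing to a triangle of area at most $A$ has length bounded by an explicit function of $A$ and the total area, reducing the search to finitely many candidates enumerable on the prototype. Because the Veech groups of these Prym prototypes have explicit generators due to Lanneau-Nguyen, this finite check can be executed uniformly in $D$. Performing the exclusion uniformly across all four ranges of $x$, and verifying the boundary cases where two candidate pairs become simultaneously minimal, is where the bulk of the technical work lies.
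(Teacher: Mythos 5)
Your overall frame (Lanneau--Nguyen prototypes, a case analysis in $x=\sqrt{D}-e'_D$, a middle band where the virtual minimizer does not bound a triangle) matches the shape of the result, but the step that is supposed to give the matching lower bounds has a genuine gap. You propose to reduce sharpness to a finite search via an ``a priori length bound: any saddle connection contributing to a triangle of area at most $A$ has length bounded by an explicit function of $A$ and the total area.'' No such bound exists: applying high powers of a parabolic element of the Veech group to a small triangle produces embedded triangles (and pairs of non-parallel saddle connections) with the same normalized area but arbitrarily long sides, so smallness of the (virtual) triangle does not confine the candidates to a bounded region of the prototype polygon. The finiteness must come from the group action, not from a metric bound. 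The paper's route, which your proposal is missing, is: by the Veech dichotomy every saddle connection lies in a completely periodic direction; every such cylinder decomposition is, up to the affine/$GL(2,\mathbb{R})$ action, one of the finitely many Lanneau--Nguyen prototypes (three cylinder configurations, parametrized by integer tuples $(w,h,t,e,\epsilon)$); and --- this is the key structural lemma, the analogue of Proposition 2 used for $\mathcal{H}(2)$ --- the minimal area of an embedded triangle with a side in a given periodic direction, and the minimal cross product with one vector in that direction, are expressed exactly in terms of the circumferences and heights of the cylinders of that decomposition. This converts the infimum over all saddle connections into an explicit minimization over the integer prototype parameters, and that optimization is precisely where $e'_D$ (the largest $e$ with $e^2\equiv D \bmod 8$, $e<\sqrt{D}$) and the thresholds $4/3$, $2$, $8/3$ actually arise.

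Relatedly, your plan to ``execute the finite check uniformly in $D$'' is not substantiated: the prototype polygons, their number, and the Veech group generators all vary with $D$, so a table of cross products of short saddle connections on a given prototype yields only upper bounds for that particular $D$; without the cylinder-data lemma you have no mechanism certifying that the tabulated pairs are global minimizers in each of the four ranges of $x$, which is the bulk of the theorem. Your square-discriminant discussion is also loose: saying the minimizer is ``a half-square for odd $D$'' does not by itself produce $1/(4\sqrt{D})$ unless you specify the tiling (a half-square in a tiling by $\sqrt{D}$ unit squares would give $1/(2\sqrt{D})$), so the odd case still needs the actual half-integer cylinder parameters of these square-tiled Prym forms. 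As written, the proposal delivers correct upper bounds and the right heuristic for the case boundaries, but not the lower bounds that constitute the proof.
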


\begin{thm}The values $Area_T$ and $Area_{VT}$ of lattice surfaces in the Bouw-M\"oller
	family are as follows:
	\begin{itemize}
		\item If $M$ is a regular n-gon where $n$ is even, 
		$$Area_T(M)=Area_{VT}(M)={4\sin\left({\pi\over n}\right)^2\over n}.$$
		\item If $M$ is a double n-gon where $n$ is odd, 
		$$Area_T(M)={2\sin\left({\pi\over n}\right)^2\over n},\hspace{8pt} Area_{VT}(M)={\tan\left({\pi\over n}\right)\sin\left({\pi\over n}\right)\over n}.$$
		\item If $M$ is the Bouw-M\"oller surfaces $S_{m,n}$, $\min(m,n)>2$:\\
		Let 
		$$A=\sum_{k=1}^{n-1}\sin\left({k\pi\over n}\right)^2\cdot \sum_{k=1}^{m-2}\sin\left({k\pi\over m}\right)\sin\left({(k+1)\pi\over m}\right)$$
		$$+\sum_{k=1}^{m-1}\sin\left({k\pi\over m}\right)^2\cdot \sum_{k=1}^{n-2}\sin\left({k\pi\over n}\right)\sin\left({(k+1)\pi\over n}\right).$$
		\begin{itemize}
			\item When $m$ and $n$ are both odd, 
			$$Area_T(M)=Area_{VT}(M)={\sin\left({\pi\over m}\right)^2\sin\left({\pi\over n}\right)^2\cos\left({\pi\over \min(m,n)}\right)\over A}.$$
			\item When $m$ is odd, $n$ is even, or $n$ is odd, $m$ is even, 
			$$Area_T(M)={\sin\left({\pi\over m}\right)^2\sin\left({\pi\over n}\right)^2\cos\left({\pi\over \min(m,n)}\right)\over A},
			\hspace{8pt} Area_{VT}(M)={\sin\left({\pi\over m}\right)^2\sin\left({\pi\over n}\right)^2\over 2A}.$$
			\item When $m$ and $n$ both even,  
			$$Area_T(M)=Area_{VT}={2\sin\left({\pi\over m}\right)^2\sin\left({\pi\over n}\right)^2\cos\left({\pi\over \min(m,n)}\right)\over A}.$$
		\end{itemize}
	\end{itemize}
\end{thm}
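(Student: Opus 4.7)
The plan is to treat the three families separately, exploiting in each case the explicit polygonal presentation of the surface together with the Thurston--Veech construction.

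For the regular even $n$-gon I would normalize so each side has length $2\sin(\pi/n)$ (unit circumradius), giving total polygon area $(n/2)\sin(2\pi/n)$. Two adjacent sides meet at the interior vertex angle $\pi-2\pi/n$, forming an embedded triangle of area $2\sin^2(\pi/n)\sin(2\pi/n)$; dividing yields the claimed $4\sin^2(\pi/n)/n$. The Hecke triangle group $\Delta(2,n,\infty)$ acts on the surface and is transitive on minimal saddle connections, so this is optimal, and since the minimal saddle connections occur in sufficiently many directions one also gets $Area_T=Area_{VT}$.

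For the double odd $n$-gon, the same edge normalization gives twice the total area, so the adjacent-edge triangle inside one copy gives $Area_T=2\sin^2(\pi/n)/n$. The reason $Area_{VT}$ is strictly smaller is that one can form a virtual triangle from a side of length $2\sin(\pi/n)$ and a short diagonal whose angle with the side is smaller than the interior vertex angle; a direct cross-product computation then yields $\tan(\pi/n)\sin(\pi/n)/n$, the tangent factor encoding the shallow angle between these two holonomies.

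For general $S_{m,n}$ I would use the standard semi-regular polygonal decomposition in which $S_{m,n}$ is assembled from semi-regular polygons with alternating side lengths involving $\sin(k\pi/m)$ and $\sin(k\pi/n)$. First I verify, by triangulating each semi-regular polygon and summing, that $A$ equals the total area of $S_{m,n}$ up to a parity-dependent factor (this is what produces the factor of $2$ in the even--even case). The shortest saddle connections are then the edges of length $\sin(\pi/m)$ and $\sin(\pi/n)$, and the shortest embedded triangle has these as two sides meeting at an angle extracted from the semi-regular polygon geometry; the $\cos(\pi/\min(m,n))$ factor appears as the sine of that angle via a half-angle identity. The virtual-triangle computation proceeds similarly: when exactly one of $m,n$ is even, an extra $\pi/2$-symmetry becomes available that eliminates the cosine factor and introduces a $1/2$, while in the even--even case the two minima coincide up to the doubling of $A$.

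The main obstacle is verifying optimality, i.e.\ ruling out any shorter saddle connection or smaller triangle not on the candidate list. I would handle this by using the horizontal cylinder decomposition of $S_{m,n}$ (with cylinder widths and heights built from the $\sin(k\pi/m)$ and $\sin(k\pi/n)$) to bound holonomies from below, combined with the fact that the Veech group is the $(m,n,\infty)$-triangle group, which reduces the set of directions to check modulo the group action to a finite combinatorial search. The parity-case distinctions in the statement are then forced by which rotational symmetries act on the surface itself rather than only on its Teichm\"uller disk, and therefore by which pairs of shortest saddle connections can actually be realized as the sides of an embedded triangle as opposed to merely a virtual one.
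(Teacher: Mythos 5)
Your overall architecture is the right one and is essentially the paper's: the paper also reduces to the two cylinder-decomposition prototypes (horizontal and vertical) coming from Hooper's $(m-1)\times(n-1)$ grid-graph presentation, with Wright's theorem pinning down the Veech group, and then reads off everything from the explicit sine data -- indeed the quantity $A$ is exactly the sum over edges of the grid graph of products of the eigenfunction values $\sin(i\pi/m)\sin(j\pi/n)$, i.e.\ the surface area in that normalization, and the numerators are half the ``corner rectangle'' products. However, two of your concrete steps do not hold as stated. First, for the double odd $n$-gon your exhibited pair (an edge together with a short diagonal of the $n$-gon, meeting at angle $\pi/n$) has cross product $2\sin(\pi/n)\cdot 2\sin(2\pi/n)\cdot\sin(\pi/n)=4\sin^2(\pi/n)\sin(2\pi/n)$, which after dividing by twice the area $n\sin(2\pi/n)$ gives $2\sin^2(\pi/n)/n$ -- that is $Area_T$ again, not the claimed $\tan(\pi/n)\sin(\pi/n)/n$. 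The minimizing virtual triangle is instead formed by two \emph{edge} holonomies at angle $\pi/n$ (one edge from each of the two $n$-gons), giving $4\sin^3(\pi/n)$ and hence $\sin^2(\pi/n)/(n\cos(\pi/n))$; so the ``direct cross-product computation'' you invoke does not produce the stated formula with the pair you describe.

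Second, the optimality (lower-bound) arguments are where all the real work lies, and your mechanisms there are not sufficient. Transitivity of the Hecke group on minimal saddle connections does not rule out a smaller triangle or virtual triangle built from longer saddle connections at a shallow angle; what is needed (and what the paper uses, via the proposition in the proof of Theorem~1.2) is that every side of an embedded triangle lies in a periodic direction, that up to the affine action only the two grid-graph decompositions occur, and that within a decomposition the triangle area is bounded below by half the product of a cylinder circumference with an adjacent cylinder height, with cross products of crossing saddle connections bounded analogously. Relatedly, your explanation of the parity cases -- an ``extra $\pi/2$-symmetry'' removing the cosine when exactly one of $m,n$ is even, and a ``parity-dependent factor'' in the identity $A=\mathrm{Area}$ producing the $2$ in the even--even case -- is speculative and does not match the actual source of the case distinction, which is the configuration of cone points on the cylinder boundaries in the two decompositions (equivalently, which corner rectangles and which crossing saddle-connection pairs with cross product $\sin^2(\pi/m)\sin^2(\pi/n)$ actually exist). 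Until those configurations are verified from the grid-graph/semi-regular-polygon data, the case-by-case formulas, in particular the $1/2$ versus $\cos(\pi/\min(m,n))$ distinction and the factor $2$, are asserted rather than proved.
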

The formulas in Theorem 1.4 are derived by the eigenfunctions of grid graphs in \cite{ph}.

\begin{thm}
	The three lattice surfaces in \cite{4} and \cite{kenyon2000billiards} have the following $Area_T$ and $Area_{VT}$:
	\begin{itemize}
		\item The lattice surface obtained from the triangle with angles $(\pi/4,\pi/3,5\pi/12)$ has $Area_T=1/8-\sqrt{3}/24\approx 0.0528312$, $Area_{VT}=\sqrt{3}/6-1/4\approx 0.0386751$.
		\item The lattice surface obtained from the triangle with angles $(2\pi/9,\pi/3,4\pi/9)$ has $Area_T\approx 0.0259951$, $Area_{VT}\approx 0.0169671$.
		\item The lattice surface obtained from the triangle with angles $(\pi/5,\pi/3,7\pi/15)$ has $Area_T=Area_{VT}\approx 0.014189$.
	\end{itemize}
\end{thm}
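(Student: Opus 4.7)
The proof is a direct computation performed separately on each of the three surfaces. For a triangle $T$ with angles $(p_1\pi/N, p_2\pi/N, p_3\pi/N)$ I would first realize the associated translation surface $M$ explicitly via the Katok--Zemlyakov unfolding --- gluing $2N$ reflected copies of $T$ according to the dihedral action --- and then normalize the total area to $1$. In our three cases $N \in \{12, 9, 15\}$, and the resulting surface carries a natural rotation symmetry of order $N$ that will simplify the search below.

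Next I would record generators of the Veech group. All three triangles are known by the cited references to give lattice surfaces, and in each case the Veech group is generated by the order-$N$ rotation together with a single parabolic element coming from a cylinder decomposition in one distinguished direction (for instance, along the longest side). Because the Veech group lies in $SL(2,\mathbb{R})$, it preserves both the cross product of holonomy pairs and the area of every embedded triangle, so it suffices to consider saddle connections whose holonomy lies in a chosen fundamental domain for the Veech action on $\mathbb{R}^2$.

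The third step is to produce an explicit length threshold $L$ above which no holonomy can participate in an optimal configuration. To obtain $L$ I would exhibit one concrete pair realizing the asserted $Area_{VT}$ and one concrete triple bounding an embedded triangle realizing the asserted $Area_T$; any configuration all of whose holonomies have length exceeding $L$ cannot beat these. Since the Veech group is a lattice, only finitely many orbits of directions admit a saddle connection of length at most $L$, and in each such direction the cylinder decomposition yields a finite list of saddle connections to examine. One then minimizes $\tfrac12|v\times v'|$ over non-parallel pairs, and minimizes the area over triples $(v_1, v_2, v_3)$ with $v_1 + v_2 + v_3 = 0$ that bound an embedded triangle, and checks that the minima match the asserted values.

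The main obstacle lies in the third step: producing a provably exhaustive list of short directions. Rigorous enumeration requires either identifying the cusps of the Teichmüller curve and, for each cusp, listing the short horizontal saddle connections on all surfaces in a shallow horoball neighborhood, or using the explicit Veech group generators to propagate a finite set of seed directions under the group action. A secondary subtlety is that not every triple with $v_1+v_2+v_3=0$ bounds an \emph{embedded} triangle; verifying embeddedness requires tracking the cyclic ordering of the three vectors at each cone point and ruling out self-intersections. This is especially delicate for the Kenyon--Smillie surface, where the equality $Area_T=Area_{VT}$ forces the optimizing virtual-triangle pair to actually close up into an embedded triangle, so one must confirm that closure rather than merely finding the minimum cross product.
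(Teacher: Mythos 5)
The central reduction in your plan --- choosing a threshold $L$ and discarding every configuration all of whose holonomies are longer than $L$ --- does not work as stated, and this is exactly the step on which the whole computation rests. Small area does not force short sides: a triangle of area below your candidate value can be arbitrarily long and thin, and a pair of saddle connections with tiny cross product can consist of two very long holonomies pointing in nearly parallel (but distinct) directions. So exhibiting one candidate pair and one candidate triangle yields no length cutoff at all. The finiteness you need comes only after normalizing by the Veech group in two stages: map the direction of one saddle connection to one of the finitely many cusp directions (so its holonomy lies in a finite list), and then reduce the \emph{second} holonomy modulo the parabolic multi-twist stabilizing that cusp; only after this reduction do the constraint $\tfrac12\lVert v\times v'\rVert<\text{candidate}$ and the cylinder geometry confine the search to a finite set. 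You flag this as the ``main obstacle'' and name the two standard repairs, but you do not carry either out, so the proposal has a genuine gap precisely where the content of the theorem lies.

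For comparison, the paper avoids the search altogether and argues structurally, as in its proof of Theorem 1.2: by the Veech dichotomy every side of an embedded triangle (and every saddle connection in a minimizing pair) lies in a completely periodic direction, and for these three isolated surfaces the possible pairs of cylinder decompositions are, up to the affine group, exactly the Thurston--Veech decompositions attached to the graphs $\mathcal{E}_6$, $\mathcal{E}_7$, $\mathcal{E}_8$ (Leininger). The cylinder widths and circumferences are read off from the Perron--Frobenius eigenvector of the relevant graph, and $Area_T$ and $Area_{VT}$ are then minima of explicit cylinder quantities: the height of a triangle over a side is at least the height of an adjacent cylinder, and a non-parallel saddle connection must cross some cylinder of the decomposition. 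If you wish to rescue your direct-search approach, implementing the cusp/parabolic reduction quantitatively is unavoidable, and doing so essentially reproduces this cylinder-by-cylinder analysis; it would also be where your embeddedness check (cyclic order of the three sides at the cone points) gets carried out.
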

The values of $Area_T$ and $Area_{VT}$ in Theorem 1.5 are calculated from the eigenvectors corresponding to the leading eigenvalues of graphs $\mathcal{E}_6$, $\mathcal{E}_7$ and $\mathcal{E}_8$.

\section{Calculation of $Area_T$ and $Area_{VT}$}
\label{sec:2}

We begin by proving Theorem 1.2.\\

\begin{proof}[Proof of Theorem 1.2] Veech surfaces in the stratum $\mathcal{H}(2)$ have been described completely by Calta \cite{2} and McMullen \cite{1}. 
	Each of them is associated with an order with a discriminant $D\in \mathbb{Z}$, $D>4$, $D\equiv 0$ or $1 \mod{4}$. 
	There are two Teichm\"uller curves in $\mathcal{H}(2)$ with discriminant $D$ when $D\equiv 1\mod 8$. 
	There is only one Teichm\"uller curve in $\mathcal{H}(2)$ with discriminant $D$ otherwise.\\
	
	When $D$ is a square, the lattice surfaces in $\mathcal{H}(2)$ with discriminant $D$ are square-tiled surfaces, so $Area_T\geq {1\over{2\sqrt{D}}}$, and $Area_{VT}\geq {1\over{2\sqrt{D}}}$. 
	On the other hand, Corollary A2 in \cite{1}, which gives a description of a pair of cylinder decompositions of such surfaces, shows that $Area_T\leq {1\over{2\sqrt{D}}}$, and $Area_{VT}\leq {1\over{2\sqrt{D}}}$. 
	Hence, when $D$ is a square, $Area_T=Area_{VT}={1\over{2\sqrt{D}}}$.\\
	
	Now we consider the case when $D$ is not a square. 
	Consider an embedded triangle on this lattice surface formed by saddle connections. 
	The Veech Dichotomy \cite{veech1989teichmuller} says that the geodesic flow on a lattice surface is either minimal or completely periodic. 
	Hence, any edge of this triangle must lie on a direction where $M$ can be decomposed into 2 cylinders $M=E_1\cup E_2$, as shown in Figure~\ref{p}, where the periodic direction is drawn to be the horizontal direction.\\
	
	\begin{figure}
		\begin{tikzpicture}[ ray/.style={decoration={markings,mark=at position .5 with {
					\arrow[>=latex]{>}}},postaction=decorate}
		]
		\draw[-](0.4,3)--(0,0.5)--(2,0.5)--(2.4,3)--(0.4,3)--(1.5,5)--(4.5,5)--(3.4,3)--(2.4,3);
		\node at (1.2,1.5){$E_1$};
		\node at (2.5,4){$E_2$};
		\node at (1,0.2){$c_1$};
		\node at (3,5.3){$c_2$};
		\draw[->](0.7,0.2)--(0,0.2);
		\draw[->](1.3,0.2)--(2,0.2);
		\draw[-](0,0)--(0,0.4);
		\draw[-](2,0)--(2,0.4);
		\node at (-0.3,1.7){$h_1$};
		\draw[-](-0.5,0.5)--(-0.1,0.5);
		\draw[-](-0.5,3)--(-0.1,3);
		\draw[->](-0.3,2)--(-0.3,3);
		\draw[->](-0.3,1.4)--(-0.3,0.5);
		\draw[->](2.7,5.3)--(1.5,5.3);
		\draw[->](3.3,5.3)--(4.5,5.3);
		\draw[-](1.5,5.1)--(1.5,5.5);
		\draw[-](4.5,5.1)--(4.5,5.5);
		\node at (4.8,4){$h_2$};
		\draw[-](4.6,5)--(5,5);
		\draw[-](4.6,3)--(5,3);
		\draw[->](4.8,4.3)--(4.8,5);
		\draw[->](4.8,3.7)--(4.8,3);
		\end{tikzpicture}
		\caption{\label{p}}
	\end{figure}
	
	\begin{prop} Given a $2$-cylinder splitting, denote the circumferences and heights of the two cylinders as $c_1$, $c_2$ and $h_1$, $h_2$, and choose the labels such that $c_1<c_2$. Then, $Area_T=\min\left\{{c_1h_1\over 2},{c_1h_2\over 2}\right\}$, where the minimum is over all possible splittings.\end{prop}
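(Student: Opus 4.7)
The plan is to prove the formula by matching an upper bound from explicit triangle constructions with a lower bound from a case analysis on embedded triangles.

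For the upper bound, fix a 2-cylinder splitting with data $(c_1, c_2, h_1, h_2)$. I would use Corollary A2 of \cite{1}, which describes the combinatorics of 2-cylinder decompositions in $\mathcal{H}(2)$, to observe that the narrow cylinder $E_1$ has a horizontal boundary consisting of a single saddle connection of length $c_1$. Taking this saddle connection as a base, together with a diagonal saddle connection crossing $E_1$ of vertical extent $h_1$ and a closing third saddle connection, produces an embedded triangle of area $c_1 h_1/2$ contained in the closure of $E_1$. A parallel construction whose diagonal crosses $E_2$ instead gives an embedded triangle of area $c_1 h_2/2$. Taking the minimum over all splittings yields $Area_T \leq \min_{\mathrm{splittings}} \min\{c_1 h_1/2,\, c_1 h_2/2\}$.

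For the lower bound, let $T$ be any embedded triangle with edge vectors $v_1 + v_2 + v_3 = 0$. By the Veech dichotomy the direction of each edge admits a 2-cylinder splitting. After rotating so that $v_1$ is horizontal, write $\ell = |v_1|$ and let $d$ be the vertical distance from the line through $v_1$ to the third vertex, so $\mathrm{Area}(T) = \ell d/2$. The target inequality is $\ell d \geq c_1 \min(h_1, h_2)$. If $\ell \geq c_1$, this is immediate, since $d$ is a positive vertical distance between two copies of the unique cone point and so must be at least $\min(h_1, h_2)$, the smallest positive value attained by such a distance in this cylinder decomposition. In the harder case $\ell < c_1$, the saddle connection $v_1$ must sit among the ``short'' horizontal saddle connections making up the boundary of the wider cylinder $E_2$, and embeddedness of $T$ then forces the third vertex to be at vertical distance at least $h_1 + h_2$, after which a direct comparison (using the fact that the short horizontal saddle connections sum to $c_2$ and that $c_1 < c_2$) yields the desired bound.

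The main obstacle is this case $\ell < c_1$, where embeddedness and the fine combinatorial structure of horizontal saddle connections must be combined to prevent the triangle from being too thin. Once both cases are settled, taking the infimum over all embedded triangles $T$ and over the three choices of ``horizontal'' edge of each $T$ produces the matching lower bound $Area_T \geq \min_{\mathrm{splittings}} \min\{c_1 h_1/2,\, c_1 h_2/2\}$, completing the proof.
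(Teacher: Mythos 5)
Your upper bound and your case $\ell\geq c_1$ are essentially the paper's red and blue triangles, and the latter case is sound: the edge from a base vertex to the apex is a straight (hence vertically monotone) saddle connection, so $d=n_1h_1+n_2h_2\geq\min(h_1,h_2)$. The genuine gap is in the case $\ell<c_1$. In a two-cylinder splitting of a surface in $\mathcal{H}(2)$ the three horizontal saddle connections have lengths $c_1$, $c_1$ and $c_2-c_1$: the narrow cylinder is bounded above and below by the two connections of length $c_1$, while the single connection of length $c_2-c_1$ has the \emph{wide} cylinder on both of its sides (so your picture of several short connections summing to $c_2$ is not the actual combinatorics). Taking that short connection as base and a copy of the cone point on the far boundary of the wide cylinder as apex gives an embedded triangle with $d=h_2$ exactly and area $(c_2-c_1)h_2/2$ (this is the green triangle of Figure~\ref{p1}); so embeddedness does not force $d\geq h_1+h_2$, and even if it did, $(c_2-c_1)(h_1+h_2)\geq c_1\min(h_1,h_2)$ fails when $c_2-c_1$ is small and $h_1\approx h_2$. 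In fact your target inequality $\ell d\geq c_1\min(h_1,h_2)$, asserted for the splitting determined by the chosen side $v_1$, is simply false for this triangle; the proposition is only true because the minimum is taken over \emph{all} splittings.

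This is precisely the point the paper's proof has to address, and it does so with an idea absent from your plan: it re-splits the surface along a new periodic direction (the yellow segment in Figure~\ref{p1}) and observes that the problematic triangle of area $(c_2-c_1)h_2/2$ is exactly the $c_1'h_1'/2$ triangle of the new two-cylinder splitting $E_1',E_2'$, hence is already accounted for in the minimum over splittings; the remaining candidate with base $c_2$ and height $h_2$ is discarded because it strictly exceeds the $c_1h_2/2$ triangle of the same splitting. To repair your argument you need this re-splitting step (or an equivalent argument showing that any triangle with base the length-$(c_2-c_1)$ connection and $d=h_2$ realizes $\min\{c_1'h_1'/2,\,c_1'h_2'/2\}$ for some other splitting); merely invoking the freedom to choose a different side of $T$ at the end, without such an identification, does not close the case $\ell<c_1$.
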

	
	\begin{proof} For each splitting shown in Figure~\ref{p1}, ${c_1h_1\over 2}$, ${c_1h_2\over 2}$ are the areas of the red and blue triangles respectively.\\
		
		\begin{figure}
			\begin{tikzpicture}[ ray/.style={decoration={markings,mark=at position .5 with {
						\arrow[>=latex]{>}}},postaction=decorate}
			]
			\draw[-](0.4,3)--(0,0.5)--(2,0.5)--(2.4,3)--(0.4,3)--(1.5,5)--(4.5,5)--(3.4,3)--(2.4,3);
			\node at (1,0.2){$c_1$};
			\node at (3,5.3){$c_2$};
			\draw[->](0.7,0.2)--(0,0.2);
			\draw[->](1.3,0.2)--(2,0.2);
			\draw[-](0,0)--(0,0.4);
			\draw[-](2,0)--(2,0.4);
			\node at (-0.3,1.7){$h_1$};
			\draw[-](-0.5,0.5)--(-0.1,0.5);
			\draw[-](-0.5,3)--(-0.1,3);
			\draw[->](-0.3,2)--(-0.3,3);
			\draw[->](-0.3,1.4)--(-0.3,0.5);
			\draw[->](2.7,5.3)--(1.5,5.3);
			\draw[->](3.3,5.3)--(4.5,5.3);
			\draw[-](1.5,5.1)--(1.5,5.5);
			\draw[-](4.5,5.1)--(4.5,5.5);
			\node at (4.8,4){$h_2$};
			\draw[-](4.6,5)--(5,5);
			\draw[-](4.6,3)--(5,3);
			\draw[->](4.8,4.3)--(4.8,5);
			\draw[->](4.8,3.7)--(4.8,3);
			\draw[fill=red](0.4,3)--(2,0.5)--(2.4,3)--cycle;
			\draw[fill=blue](0.4,3)--(2.4,3)--(1.5,5)--cycle;
			\draw[fill=green](2.4,3)--(3.4,3)--(4.5,5)--cycle;
			\draw[fill=purple](1.5,5)--(2.4,3)--(4.5,5)--cycle;
			\draw[-,very thick, yellow](2.4,3)--(3.5,5);
			\end{tikzpicture}

			\caption{\label{p1}}
		\end{figure}
		
		Given any embedded triangle formed by saddle connections, split the surface in the direction of one of the sides of this triangle, denoted as $a$. 
		Choose $a$ as the base, then the height of the triangle with regard to $a$ can not be smaller than the height of the cylinder(s) bordering $a$. 
		Hence, if the splitting is as shown in Figure~\ref{p1}, the area of this triangle can not be smaller than the minimum of the areas of the green, blue, red and purple triangles. 
		The area of the purple triangle is strictly larger than the blue triangle because it has the same height and a longer base. 
		Furthermore, after a re-splitting of the surface along the yellow line, the part to the right of the yellow line and the part to the left of the yellow line form two cylinders $E_1'$ and $E_2'$. 
		We can see that the area of the green triangle would be half of the area of $E_2'$, in other words, the green triangle has area ${c'_1h'_1\over 2}$, where $c'_i$ and $h'_i$ are the circumferences and heights of the cylinders in the new splitting.\end{proof}
	
	According to Theorem 3.3 of \cite{1}, after a $GL(2,\mathbb{R})$ action, we can make any splitting into one of the finitely many prototypes. 
	Each of these prototypes corresponds to an integer tuple $(a,b,c,e)$, and is illustrated in Figure~\ref{p2}. Here $\lambda^2=e\lambda+d$, $bc=d$, $a,b,c\in \mathbb{Z}$, $D=4d+e^2$.\\
	
	\begin{figure}
		\begin{tikzpicture}
		\draw[-](0.4,3)--(0.4,1)--(2.4,1)--(2.4,3)--(0.4,3)--(1.5,5)--(4.5,5)--(3.4,3)--(2.4,3);
		\node at (1.4,0.7) {$\lambda$};
		\draw[->](1.1,0.7)--(0.4,0.7);
		\draw[->](1.7,0.7)--(2.4,0.7);
		\draw[-](0.4,0.5)--(0.4,0.9);
		\draw[-](2.4,0.5)--(2.4,0.9);
		\node at (0.1,2){$\lambda$};
		\draw[->](0.1,1.7)--(0.1,1);
		\draw[->](0.1,2.3)--(0.1,3);
		\draw[-](-0.1,1)--(0.3,1);
		\draw[-](-0.1,3)--(0.3,3);
		\node at (1,5.3){$a$};
		\draw[-](0.4,5.1)--(0.4,5.5);
		\draw[->](0.7,5.3)--(0.4,5.3);
		\draw[->](1.3,5.3)--(1.5,5.3);
		\node at (3,5.3){$b$};
		\draw[->](2.7,5.3)--(1.5,5.3);
		\draw[->](3.3,5.3)--(4.5,5.3);
		\draw[-](1.5,5.1)--(1.5,5.5);
		\draw[-](4.5,5.1)--(4.5,5.5);
		\node at (4.8,4){$c$};
		\draw[-](4.6,5)--(5,5);
		\draw[-](4.6,3)--(5,3);
		\draw[->](4.8,4.3)--(4.8,5);
		\draw[->](4.8,3.7)--(4.8,3);
		\
		\end{tikzpicture}

		\caption{\label{p2}}
	\end{figure}
	
	Define the number $e_D$ as the greatest integer that is both smaller than $\sqrt{D}$ and congruent to $D$ mod 2. Hence $\sqrt{D}-e_D\leq 2$. So, 
	$$\min_{M\in E_D\cap \mathcal{H}(2)}Area_T(M)
	=\inf_\lambda\left({\lambda^2\over 2(d+\lambda^2)},{\lambda\over 2(d+\lambda^2)}\right)=\min\left({1\over 2\sqrt{D}}, {\sqrt{D}-e_D\over 4\sqrt{D}}\right)= {\sqrt{D}-e_D\over 4\sqrt{D}}.$$\\
	
	Furthermore, when $D\equiv 1 \mod{8}$, $(a,b,c,e)=\left(0,{D-e_D^2\over 4}, 1, -e_D\right)$ and $(a,b,c,e)=\left(0,1, {D-e_D^2\over 4}, -e_D\right)$ are prototypes of lattice surfaces that are not affinely equivalent, according to Theorem 5.3 in \cite{1}. 
	The areas of red triangles corresponding to these two prototypes dividing by the total area of these surfaces are both $\sqrt{D}-e_D\over 4\sqrt{D}$, hence surfaces belonging to both components in $E_D\cap\mathcal{H}(2)$ have the same $Area_T$. 
	Hence, $Area_T(M)= {\sqrt{D}-e_D\over 4\sqrt{D}}$ for all non-square $D$ and all $M\in E_D\cap \mathcal{H}(2)$.\\
	
	Now consider $Area_{VT}$. 
	Split the surface in the direction of one of the saddle connections as in Figure~\ref{p}, then the other saddle connection has to cross through either $E_1$ or $E_2$. 
	So, the length of their cross product has to be larger than $\min\{c_1,c_2-c_1\}\min\{h_1,h_2\}=\min\{c_1h_2,h_2(c_2-c_1),c_1h_1,h_1(c_2-c_1)\}$. 
	On the other hand, $c_1h_2$, $h_2(c_2-c_2)$, and $a_1b_1$ are twice the areas of the blue, green, and red triangles respectively, so 
	$$Area_{VT}=\min\left(Area_T,\min\left\{{a_2(b_1-b_2)\over 2Area(M)}\right\}\right).$$
	The second minimum goes through all 2-cylinder splittings, or equivalently, all splitting prototypes. 
	Hence, 
	$$\min\left\{{a_2(b_1-b_2)\over 2Area(M)}\right\}=\min_{prototype}{(b-\lambda)\lambda\over 2(d+\lambda^2)}=\min_{prototype}{2b-e-\sqrt{D}\over 4\sqrt{D}}.$$
	Because $2b-e\equiv D \mod{2}$, 
	$${2b-e-\sqrt{D}\over 4\sqrt{D}}\geq {2+e_D-\sqrt{D}\over 4\sqrt{D}}.$$
	On the other hand, the prototype $(a,b,c,e)=\left(0,1,{D-e_D^2\over 4}, -e_D\right)$ satisfies 
	$${2b-e-\sqrt{D}\over 2\sqrt{D}}={2+e_D-\sqrt{D}\over 2\sqrt{D}}.$$
	So, 
	$$\min_{M\in E_D\cap \mathcal{H}(2)}Area_{VT}(M)=\min\left(S_V(M),{2+e_D-\sqrt{D}\over 4\sqrt{D}}\right).$$
	Therefore, when $D\not\equiv 1 \mod{8}$, $Area_{VT}(M)=\min\left({\sqrt{D}-e_D\over 4\sqrt{D}},{2+e_D-\sqrt{D}\over 4\sqrt{D}}\right)$.\\
	
	When $D\equiv 1\mod{8}$, the prototypes $(a,b,c,e)=\left(0,e_D+1-{D-e_D^2\over 4},{D-e_D^2\over 4},e_D-{D-e_D^2\over 2}\right)$ and $(a,b,c,e)=\left(1,e_D+1-{D-e_D^2\over 4},{D-e_D^2\over 4},e_D-{D-e_D^2\over 2}\right)$ lie on different Teichm\"uller curves, and both of them satisfy ${2b-e-\sqrt{D}\over 2\sqrt{D}}={2+e_D-\sqrt{D}\over 2\sqrt{D}}$. 
	Hence, both components have the same $Area_{VT}$. 
	In conclusion, $Area_{VT}(M)= \min\left({\sqrt{D}-e_D\over 4\sqrt{D}},{2+e_D-\sqrt{D}\over 4\sqrt{D}}\right)$ for all $M\in E_D\cap \mathcal{H}(2)$.\end{proof}

The proofs of Theorem 1.3-1.5 are similar. 
For Theorem 1.3 the necessary prototypes of cylinder decompositions are described in section 4 of \cite{6}. 
There are only 3 types of cylinder configurations, but the Dehn twist vectors can be different, and they
are parametrized by a 5-tuple $(w, h, t, e, \epsilon)\in\mathbb{Z}^5$. 
For Theorem 1.4, the prototypes are described in \cite{ph} by $(m-1)\times(n-1)$ grid graphs and proved by \cite{wright2012schwarz}, and there are only 2 of them corresponding to the horizontal and vertical cylinders in the pair of cylinder decompositions defined by the grid graph. 
For Theorem 1.5, in either of the 3 cases, the only possible pair of cylinder decompositions has been described in \cite{leininger2004groups} by one of the three graphs $\mathcal{E}_6$, $\mathcal{E}_7$ or $\mathcal{E}_8$.

\section{Enumeration of lattice surfaces with $Area_{VT}>0.05$}
\label{sec:3}

Now we prove Theorem 1.1 using the algorithm in \cite{5}, which provides a way to list all lattice surfaces and calculate their Veech groups. 
The algorithm is based on analyzing all Thurston-Veech structures consisting of less than a given number of rectangles.\\

Let $M$ be a lattice surface. 
After an affine transformation, we can let the two saddle connections that form the smallest virtual triangle be in the horizontal and the vertical directions without loss of generality. 
The Thurston-Veech construction \cite{thurston1988geometry} gives a decomposition of $M$ into rectangles using horizontal and vertical saddle connections. The surface $M$ is, up to scaling, completely determined by the configuration of those rectangles as well as the ratios of moduli of horizontal and vertical cylinders. Hence, we can find all lattice surfaces with given $Area_{VT}$ by analyzing all possible Thurston-Veech structures.\\ 

Smillie and Weiss presented their algorithm in the following way:\\

\begin{enumerate}
	\item Fix $\epsilon>0$, find all possible pairs of cylinder decompositions with less than $\left\lfloor{1\over 2\epsilon}\right\rfloor$ rectangles (there are finitely many such pairs), calculate their intersection matrices and decide the position of cone points.
	\item For any number $k$, find all possible Dehn twist vectors for a $k$-cylinder decomposition.
	\item Use the result from Step (1) and (2) to determine the shape of all possible flat surfaces, and rule out most of them with criteria based on \cite{5}, which we will state explicitly later.
	\item Rule out the remaining surfaces by explicitly finding pairs of saddle connections with holonomy vectors $l$, $l'$ such that ${{||l\times l'||}\over 2Area(M)}<\epsilon$.\\
\end{enumerate}

In step (2) and (3), we made some modification to improve the efficiency, which we will describe below.\\ 

Now we describe these steps in greater detail.\\

Step 1: Choose $\epsilon=0.05$, find all possible pairs of cylinder decompositions with less than $\left\lfloor{1\over 2\epsilon}\right\rfloor=10$ rectangles. 
Calculate their intersection matrices and decide the position of cone points.\\

A pair of cylinder decompositions partition the surface into finitely many rectangles. 
Let $r$ be the permutation of those rectangles that send each rectangle to the one to its right, and $r'$ be the permutation that send each rectangle to the one below, then the cylinder intersection pattern can be described by these two permutations. 
According to \cite{5}, if $M$ is a lattice surface with $Area_{VT}=\epsilon$, the pair of cylinder decomposition described as above have to decompose the surface into fewer than ${1\over 2\epsilon}$ rectangles. 
Therefore, in this step, we only need to find all transitive pairs of permutations of 9 or less elements up to conjugacy. Furthermore, we do not need to consider those pairs that correspond to a surface of genus $2$ or lower, because lattice surfaces of genus 2 or lower have already been fully classified. We also disregard those with one-cylinder decomposition in either the horizontal or vertical direction, because in either case the surface is square-tiled. In order to speed up the conjugacy check of pairs of permutations, we firstly computed the conjugacy classes of all permutations of less than $10$ elements and put them in a look-up table. Then, whenever we need to check if $r_1$, $r'_1$ and $r_2$, $r'_2$ are conjugate, we can first check if $r_1$ and $r'_1$, as well as $r_2$ and $r'_2$, belong to the same conjugacy classes.\\

Next, we calculate the following data for these cylinder decomposition: (1) the intersection matrix $A$; (2) three matrices $V, H, D$, with entries either 0 or 1, defined as follows:\begin{itemize}
	\item $V(i,j)=1$ iff the $i$-th horizontal cylinder intersects with the $j$-th vertical cylinder, and in their intersection there is at least one rectangle such that its upper-left and lower-left corners, or upper-right and lower-right corners are cone points; 
	\item $H(i,j)=1$ iff the $i$-th horizontal cylinder intersects with the $j$-th vertical cylinder, and in their intersection there is at least one rectangle such that its upper-left and upper-right corners, or lower-left and lower-right corners are both cone points; 
	\item $D(i,j)=1$ iff the $i$-th horizontal cylinder intersects with the $j$-th vertical cylinder, and in their intersection there is at least one rectangle such that its lower-right one and the upper-left corners are both cone points. 
\end{itemize}
The matrices $V, H, D$ will be used in the criteria in step 3. 
To decide whether or not the lower-right corner of the $i$-th rectangle is a cone point, we calculate $r'r(i)$ and $rr'(i)$ and check if they are different.\\

Step 2: For any number $k$, find all possible Dehn twist vectors for a $k$-cylinder decomposition.\\

Equation (9) in the proof of Proposition 3.6 of \cite{5} shows that, if the ratio between the $i$-th and the $j$-th entries of a Dehn twist vector is $p/q$, where $p$, $q$ are natural numbers and $\gcd(p,q)=1$, then $pq\leq A_iA_j/\beta^2$, where $\beta$ is an upper bound of $Area_{VT}$, and $A_i$ is the area of the $i$-th cylinder divided by the total area. 
On the other hand, by Cauchy-Schwarz inequality,
\begin{align*}
\sum_{i\neq j}A_iA_j=&{1\over 2}\left(\left(\sum_i A_i\right)^2-\sum_i A_i^2\right)\\
=&{1\over 2}\left(1-\sum_iA_i^2\right)\leq {1\over 2}\left(1-{1\over k}\left(\sum_i A_i\right)^2\right)\\
=&{k-1\over 2k}.
\end{align*}

Hence, we have:
\begin{prop}The vector $(n_1,\dots, n_k)$ cannot be a Dehn twist vector for a surface with $Area_{VT}<\beta$ if $\sum_{1\leq i<j\leq k} s_{ij}\geq {k-1\over 2k\beta^2}$, where $s_{ij}=n_in_j/\gcd(n_i,n_j)^2$.
\end{prop}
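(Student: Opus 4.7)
The plan is to prove the proposition by taking the contrapositive: assume $M$ is a lattice surface with $\operatorname{Area}_{VT}(M)<\beta$ admitting $(n_1,\dots,n_k)$ as a Dehn twist vector, and derive an upper bound on $\sum_{i<j} s_{ij}$ that matches the stated threshold. The main input is equation (9) from the proof of Proposition 3.6 in \cite{5}, cited just before the statement, together with the Cauchy--Schwarz-type inequality already worked out in the paragraph preceding the proposition.

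First I would identify the quantity $s_{ij}$ with the product $pq$ appearing in the Smillie--Weiss bound. By definition, if we set $p=n_i/\gcd(n_i,n_j)$ and $q=n_j/\gcd(n_i,n_j)$, then $p/q$ is the reduced form of the ratio $n_i/n_j$, and $pq=n_in_j/\gcd(n_i,n_j)^2=s_{ij}$. Applying equation (9) of \cite{5} to each pair $i\neq j$ yields $s_{ij}\leq A_iA_j/\beta^2$.

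Next I would sum this inequality over all pairs $1\leq i<j\leq k$ and invoke the displayed computation just above the proposition, which gives
\[
\sum_{1\leq i<j\leq k} A_iA_j \;\leq\; \frac{k-1}{2k}.
\]
Combining these two bounds produces
\[
\sum_{1\leq i<j\leq k} s_{ij} \;\leq\; \frac{1}{\beta^2}\sum_{1\leq i<j\leq k} A_iA_j \;\leq\; \frac{k-1}{2k\beta^2}.
\]
Taking the contrapositive: if $\sum_{i<j} s_{ij}\geq (k-1)/(2k\beta^2)$, then $(n_1,\dots,n_k)$ cannot arise as a Dehn twist vector for any lattice surface with $\operatorname{Area}_{VT}<\beta$.

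There is no real obstacle here; the statement is essentially a packaging of the Smillie--Weiss estimate with the elementary symmetric-function bound. The only point worth being careful about is the reduction $s_{ij}=pq$, which requires checking that the $\gcd$ normalization matches the reduced fraction $p/q$. Once that identification is made, the rest is a two-line calculation, and the proposition serves in the algorithm as a cheap necessary condition to prune candidate Dehn twist vectors in Step~2.
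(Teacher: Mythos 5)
Your proposal is correct and follows essentially the same route as the paper: the paper's own justification is exactly the paragraph preceding the proposition, namely the pairwise application of the cited equation (9) of Smillie--Weiss in the form $s_{ij}=pq\leq A_iA_j/\beta^2$ together with the Cauchy--Schwarz estimate $\sum_{1\leq i<j\leq k}A_iA_j\leq\frac{k-1}{2k}$, combined and read contrapositively just as you do. The identification $s_{ij}=pq$ via $p=n_i/\gcd(n_i,n_j)$, $q=n_j/\gcd(n_i,n_j)$ that you single out is indeed the only bookkeeping point, and it matches the paper's usage.
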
\qed

% % % % % % % % %grammar
Step 3: Determine the shape of possible surfaces, rule out most of those with area of virtual triangle smaller than $\epsilon=0.05$.\\

For each tuple $(A, V, H, D)$ obtained in step 1, and each Dehn twist vector obtained in step 2, we can calculate the widths and circumferences of cylinders by finding Peron-Frobenious eigenvector as in \cite{thurston1988geometry}. Then, we normalize the total area to 1 and check them against the following criteria:\\

\begin{enumerate}
	\item Let $w_i$ and $w'_j$ be the widths of the $i$-th and $j$-th cylinder in the horizontal and the vertical directions, then $w_iw'_j>1/10$. 
	This follows from the proof of Proposition 5.1 in \cite{5}.
	\item Let $c_i$ and $c'_j$ be the circumference of the $i$-th and $j$-th cylinder in the horizontal and vertical direction respectively. 
	If the ratio between the modulus of the $i$-th horizontal cylinder and the $i'$-th horizontal cylinder is $p/q$, where $p,q$ are coprime integers, then $c_iw_{i'}/q>1/10$. 
	This follows from the proof of Proposition 3.5. 
	\item With the same notation as above, if there are two cone points on the boundary of $i$-th horizontal cylinder with distance $w'_j$, and $w'_j$ is not $k_0c_i/q$ for some integer $k_0$, then for any integer $k$, $\max(|w'_j-kc_j/q|,(c_i/q-|w'_j-kc_i/q|)/2)w_{i'}>1/10$. 
	This is due to an argument similar to the proof of Proposition 3.5 as follows: after a suitable parabolic affine action we can assume that there is a vertical saddle connection crossing the $i'$-th cylinder.
	Let the holonomy vectors of two saddle connections crossing the $i$-th cylinder from a same cone point to those two cone points be $(x+nc_i, w_i)$ and $(x+w'_j+n'c_i, w_i)$, where $n,n'\in\mathbb{Z}$. 
	Do a parabolic affine action on the surface that is a Dehn twist on the $i'$-th cylinder, then their holonomy vectors will become $(x+rc_i/q+nc_i, w_i)$ and $(x+w'_j+rc_i/q+n'c_j, w_i)$, where $\gcd(r,q)=1$. 
	Repeatedly doing such affine actions, we can see that the absolute value of horizontal coordinate of the holonomy vector of at least one saddle connection we get is nonzero and no larger than $\max(|w'_j-kc_j/q|,(c_i/q-|w'_j-kc_i/q|)/2)$.
	\item Criteria (2) and Criteria (3) applies to vertical, instead of horizontal cylinders.
	\item The cross product of the holonomy vectors of diagonal saddle connections from the upper-left corner to the lower-right corner must be either 0 or larger than 1/10.\\
\end{enumerate}

In our calculation, we used an optimization which rules out some Dehn twist vectors before the calculation of Peron-Frobenious eigenvector. 
Firstly, in Step 1, we label the cylinders by the number of rectangles they contain in decreasing order. Then, when we generate Dehn twist vectors, we calculate the product of the last two entries. 
Now the last two horizontal or vertical cylinders always have the least number of rectangles, and the sum of their areas is less than $(1-c/10)$ of the total area, where $c$ is the number of rectangles not in these two cylinders. Hence, we can bound the product of their areas which in turn gives an upper bound on the product of the last two elements of the Dehn twist vector. We used the C++ linear algebra library Eigen, and the first 3 steps were done in a few hours.\\

If a 4-tuple $(A, V, H, D)$ and a pair of Dehn twist vectors pass through all the above-mentioned tests, they are printed out together with the eigenvector $(w_i)$. Below is a sample of the output of this step:\\

$A=\left(\begin{array}{ccc}3&1&1\\1&0&0\end{array}\right),
V=\left(\begin{array}{ccc}1&1&1\\1&0&0\end{array}\right),
H=\left(\begin{array}{ccc}1&0&1\\1&0&0\end{array}\right),
D=\left(\begin{array}{ccc}1&0&1\\1&0&0\end{array}\right)$\\
$n=(2,7), n'=(2,5,5), w=(1,1)$\\

$A=\left(\begin{array}{cc}6&1\\1&1\end{array}\right),
V=\left(\begin{array}{cc}0&0\\0&0\end{array}\right),
H=\left(\begin{array}{cc}0&0\\0&0\end{array}\right),
D=\left(\begin{array}{cc}0&0\\0&1\end{array}\right)$\\
$n=(1,4), n'=(1,4), w=(1,1.23607)$\\
$n=(2,7), n'=(2,7), w=(1,1)$\\

$A=\left(\begin{array}{cc}5&2\\1&1\end{array}\right),
V=\left(\begin{array}{cc}0&0\\0&0\end{array}\right),
H=\left(\begin{array}{cc}1&0\\0&0\end{array}\right),
D=\left(\begin{array}{cc}1&0\\0&1\end{array}\right)$\\
$n=(2,7), n'=(1,2), w=(1,1)$\\

$A=\left(\begin{array}{cc}6&1\\2&0\end{array}\right),
V=\left(\begin{array}{cc}1&1\\0&0\end{array}\right),
H=\left(\begin{array}{cc}1&1\\1&0\end{array}\right),
D=\left(\begin{array}{cc}0&1\\0&0\end{array}\right)$\\
$n=(1,4), n'=(1,16), w=(1,1)$\\
$n=(2,7), n'=(1,8), w=(1,1)$\\

$A=\left(\begin{array}{ccc}5&1&1\\1&1&0\end{array}\right),
V=\left(\begin{array}{ccc}1&0&1\\0&0&0\end{array}\right),
H=\left(\begin{array}{ccc}1&0&1\\0&0&0\end{array}\right),
D=\left(\begin{array}{ccc}1&0&1\\0&1&0\end{array}\right)$\\
$n=(1,4), n'=(1,3,12), w=(1,1)$\\
$n=(2,7), n'=(1,3,6), w=(1,1)$\\

Each 4-tuple $(A, V, H, D)$ is followed by pairs of Dehn twist vectors $n$, $n'$ in the horizontal
and vertical directions respectively, and a vector of widths $w$. This section of the
output described 8 combinations of $(A, V, H, D)$ and Dehn twist vectors, only the second
one will result in a non-arithmetic surface, while other line all correspond to square-tiled
cases, which we verified through integer arithmetic.\\

After collecting all tuples $(A,V,H,D)$ that may generate non-arithmetic surfaces that pass the test in this step, we can use the same algorithm in step 1 to find all pairs of permutations corresponding to these tuples, hence completely decide the shape of surfaces we need to check in the next step.\\

Step 4: After the previous 3 steps, we can show that any lattice surface with area of the smallest virtual triangle larger than 1/20 is either of genus 2, or $GL(2,\mathbb{R})$-equivalent to one of the 50 remaining cases. Two of them are the Prym eigenform of discriminant 8 in genus 3. By finding saddle connections on the remaining 48 surfaces by hand, we showed that none of them has $Area_{VT}$ greater than 1/20.\\

An example of one of the 48 surfaces is shown in Figure~\ref{p4}.\\

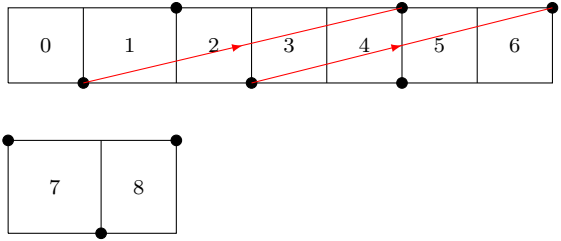
\begin{figure}
	\begin{tikzpicture}[ ray/.style={decoration={markings,mark=at position .5 with {
				\arrow[>=latex]{>}}},postaction=decorate}
	]
	\def \l {1.236};
	\draw[-](0,2)--(6+\l,2)--(6+\l,3)--(0,3)--(0,2);
	\draw[-](1,2)--(1,3);
	\draw[-](1+\l,2)--(1+\l,3);
	\draw[-](2+\l,2)--(2+\l,3);
	\draw[-](3+\l,2)--(3+\l,3);
	\draw[-](4+\l,2)--(4+\l,3);
	\draw[-](5+\l,2)--(5+\l,3);
	\node at (0.5,2.5){$0$};
	\node at (1+\l/2,2.5){$1$};
	\node at (1.5+\l,2.5){$2$};
	\node at (2.5+\l,2.5){$3$};
	\node at (3.5+\l,2.5){$4$};
	\node at (4.5+\l,2.5){$5$};
	\node at (5.5+\l,2.5){$6$};
	\draw[fill] (1,2) circle (2pt);
	\draw[fill] (1+\l,3) circle (2pt);
	\draw[fill] (2+\l,2) circle (2pt);
	\draw[fill] (4+\l,2) circle (2pt);
	\draw[fill] (4+\l,3) circle (2pt);
	\draw[fill] (6+\l,3) circle (2pt);
	\draw[fill] (0,\l) circle (2pt);
	\draw[fill] (\l,0) circle (2pt);
	\draw[fill] (1+\l,\l) circle (2pt);
	\draw[-] (\l,0)--(\l,\l);
	\draw[ray,red](1,2)--(4+\l,3);
	\draw[ray,red](2+\l,2)--(6+\l,3);
	\node at (\l/2,\l/2){$7$};
	\node at (\l+0.5,\l/2){$8$};
	\draw[-](0,0)--(\l+1,0)--(\l+1,\l)--(0,\l)--(0,0);
	\end{tikzpicture}
	\caption{\label{p4} Permutations: (in cycle notation) $(0,1,2,3,4,5,6)(7,8)$, $(0,4,6,3,5,2,8)(1,7)$; Dehn twist vectors: $(1,4)$, $(1,4)$. Dots are cone points. The holonomies of the two red saddle connections depicted have a cross product less than $1/10$ of the surface area.}
\end{figure}

Below is the list of all the 48 surfaces we checked by hand, none has $Area_{VT}>1/20$. All surfaces are represented by a pair of permutations (written in cycle notation, the $i$-th cycle is the $i$-th (horizontal or vertical) cylinder) and two Dehn twist vectors.\\

\begin{enumerate}
	\item Dehn twist vectors: (3,4), (1,2); \\
	pair of permutations: \\
	(0,1,2,3,4)(5,6,7,8), (0,3,1,6,5,8)(4,2,7)
	\item Dehn twist vectors: (1,4), (1,4); \\
	pairs of permutations:\\
	(0,1,2,3,4,5,6)(7,8), (0,3,4,5,6,2,8)(1,7)\\
	(0,1,2,3,4,5,6)(7,8), (0,4,3,5,6,2,8)(1,7)\\
	(0,1,2,3,4,5,6)(7,8), (0,5,3,4,6,2,8)(1,7)\\
	(0,1,2,3,4,5,6)(7,8), (0,4,5,3,6,2,8)(1,7)\\
	(0,1,2,3,4,5,6)(7,8), (0,3,5,4,6,2,8)(1,7)\\
	(0,1,2,3,4,5,6)(7,8), (0,5,4,3,6,2,8)(1,7)\\
	(0,1,2,3,4,5,6)(7,8), (0,6,3,4,5,2,8)(1,7)\\
	(0,1,2,3,4,5,6)(7,8), (0,4,6,3,5,2,8)(1,7)\\
	(0,1,2,3,4,5,6)(7,8), (0,5,6,3,4,2,8)(1,7)\\
	(0,1,2,3,4,5,6)(7,8), (0,4,5,6,3,2,8)(1,7)\\
	(0,1,2,3,4,5,6)(7,8), (0,6,3,5,4,2,8)(1,7)\\
	(0,1,2,3,4,5,6)(7,8), (0,5,4,6,3,2,8)(1,7)\\
	(0,1,2,3,4,5,6)(7,8), (0,3,6,4,5,2,8)(1,7)\\
	(0,1,2,3,4,5,6)(7,8), (0,6,4,3,5,2,8)(1,7)\\
	(0,1,2,3,4,5,6)(7,8), (0,5,3,6,4,2,8)(1,7)\\
	(0,1,2,3,4,5,6)(7,8), (0,6,4,5,3,2,8)(1,7)\\
	(0,1,2,3,4,5,6)(7,8), (0,3,5,6,4,2,8)(1,7)\\
	(0,1,2,3,4,5,6)(7,8), (0,5,6,4,3,2,8)(1,7)\\
	(0,1,2,3,4,5,6)(7,8), (0,3,5,6,4,2,8)(1,7)\\
	(0,1,2,3,4,5,6)(7,8), (0,3,4,6,5,2,8)(1,7)\\
	(0,1,2,3,4,5,6)(7,8), (0,4,3,6,5,2,8)(1,7)\\
	(0,1,2,3,4,5,6)(7,8), (0,6,5,3,4,2,8)(1,7)\\
	(0,1,2,3,4,5,6)(7,8), (0,4,6,5,3,2,8)(1,7)\\
	(0,1,2,3,4,5,6)(7,8), (0,3,6,5,4,2,8)(1,7)
	\item Dehn twist vectors: (1,2), (1,3); \\
	pairs of permutations: \\
	(0,1,2,3,4)(5,6,7), (0,5,1,6,2,7)(3,4)\\
	(0,1,2,3,4)(5,6,7), (1,4,6,2,3,5)(0,7)
	\item Dehn twist vectors: (1,2), (1,2); \\
	pairs of permutations: \\
	(0,1,2,3,4)(5,6,7), (0,2,4,6,7)(1,3,5)\\
	(0,1,2,3,4)(5,6,7), (0,4,3,6,7)(1,2,5)\\
	(0,1,2,3,4)(5,6,7), (0,5,2,3,7)(1,3,6)\\
	(0,1,2,3,4)(5,6,7), (1,3,6,2,5)(0,4,7)\\
	(0,1,2,3,4)(5,6,7), (0,5,1,4,7)(2,3,6)\\
	(0,1,2,3,4)(5,6,7), (0,6,2,3,7)(1,4,5)\\
	(0,1,2,3,4)(5,6,7), (0,5,1,3,7)(2,4,6)\\
	(0,1,2,3,4)(5,6,7), (0,4,6,2,7)(1,3,5)\\
	(0,1,2,3,5)(5,6,7), (0,6,3,2,7)(1,4,5)
	\item Dehn twist vectors: (1,2), (1,1); \\
	pair of permutations: \\
	(0,1,2,3,4)(5,6,7), (0,3,6,7)(1,4,2,5)
	\item Dehn twist vectors: (2,7), (2,7); \\
	pair of permutations: \\
	(0,1,2,3,4,5)(6,7), (0,3,4,5,2,7),(1,6)
	\item Dehn twist vectors: (1,1), (1,1); \\
	pair of permutations: \\
	(0,1,2,3)(4,5,6), (0,3,5,6)(1,2,4)
	\item Dehn twist vectors: (1,2), (1,2); \\
	pairs of permutations: \\
	(0,1,2,3)(4,5,6), (0,3,5,6)(1,2,4)\\
	(0,1,2,3)(4,5,6), (0,2,4,6)(1,3,5)\\
	(0,1,2,3)(4,5,6), (1,4,2,5)(0,3,6)\\
	(0,1,2,3)(4,5,6), (1,5,2,4)(0,3,6)\\
	(0,1,2,3)(4,5,6), (0,2,4,6)(1,5,3)\\
	(0,1,2,3)(4,5,6), (0,1,4,6)(2,5,3)
	\item Dehn twist vectors: (1,3), (1,3); \\
	pairs of permutations: \\
	(0,1,2,3,4)(5,6), (0,3,4,2,6)(1,5)
	\item Dehn twist vectors: (1,3), (1,3); \\
	pairs of permutations: \\
	(0,1,2,3)(4,5), (0,2,3,5)(1,4)\\
	(0,1,2,3)(4,5), (0,1,3,5)(2,4)
\end{enumerate}

And the two lattice surfaces that are Prym eigenforms in genus 3 are as follows: \begin{itemize}
	\item Dehn twist vectors (1,1,1), (1,1,1);\\
	pairs of permutations:\\ 
	(0,1,2)(3,4)(5,6), (0,4,6)(1,5)(2,3)\\
	(0,1,2)(3,4)(5,6), (1,4,5)(0,6)(2,3).
\end{itemize}

%\section{Section title}
%\label{sec:1}
%Text with citations \cite{RefB} and \cite{RefJ}.
%\subsection{Subsection title}
%\label{sec:2}
%as required. Don't forget to give each section
%and subsection a unique label (see Sect.~\ref{sec:1}).
%\paragraph{Paragraph headings} Use paragraph headings as needed.
%\begin{equation}
%a^2+b^2=c^2
%\end{equation}
%
%% For one-column wide figures use
%\begin{figure}
%% Use the relevant command to insert your figure file.
%% For example, with the graphicx package use
%  \includegraphics{example.eps}
%% figure caption is below the figure
%\caption{Please write your figure caption here}
%\label{fig:1}       % Give a unique label
%\end{figure}
%%
%% For two-column wide figures use
%\begin{figure*}
%% Use the relevant command to insert your figure file.
%% For example, with the graphicx package use
%  \includegraphics[width=0.75\textwidth]{example.eps}
%% figure caption is below the figure
%\caption{Please write your figure caption here}
%\label{fig:2}       % Give a unique label
%\end{figure*}
%%
%% For tables use
%\begin{table}
%% table caption is above the table
%\caption{Please write your table caption here}
%\label{tab:1}       % Give a unique label
%% For LaTeX tables use
%\begin{tabular}{lll}
%\hline\noalign{\smallskip}
%first & second & third  \\
%\noalign{\smallskip}\hline\noalign{\smallskip}
%number & number & number \\
%number & number & number \\
%\noalign{\smallskip}\hline
%\end{tabular}
%\end{table}

\begin{acknowledgements}
The author thanks his thesis advisor John Smillie for suggesting the problem and for many helpful conversations, and Alex Wright and Anja Randecker for many helpful comments.
\end{acknowledgements}

% BibTeX users please use one of
%\bibliographystyle{spbasic}      % basic style, author-year citations
\bibliographystyle{spmpsci}      % mathematics and physical sciences
\bibliography{template}   % name your BibTeX data base

% Non-BibTeX users please use
%\begin{thebibliography}{}
%
% and use \bibitem to create references. Consult the Instructions
% for authors for reference list style.
%
%\bibitem{RefJ}
% Format for Journal Reference
%Author, Article title, Journal, Volume, page numbers (year)
% Format for books
%\bibitem{RefB}
%Author, Book title, page numbers. Publisher, place (year)
% etc
%\end{thebibliography}

\end{document}